\newtheorem{thm}{Theorem}[section]
\newtheorem{lem}[thm]{Lemma}
\newtheorem{prop}[thm]{Proposition}
\theoremstyle{definition}
\newtheorem{exmp}[thm]{Example}
\newtheorem{rem}[thm]{Remark}
\numberwithin{equation}{section}
\newcommand{\Z}{{\mathbb{Z}}}
\newcommand{\Q}{{\mathbb{Q}}}
\newcommand{\C}{{\mathbb{C}}}
\newcommand{\F}{{\mathbb{F}}}
\newcommand{\fg}{{\mathfrak{g}}}
\newcommand{\fh}{{\mathfrak{h}}}
\newcommand{\be}{{\mathbf{e}}}
\newcommand{\fS}{{\mathfrak{S}}}
\newcommand{\cE}{{\mathcal{E}}}
\newcommand{\cN}{{\mathcal{N}}}
\DeclareMathOperator{\Irr}{Irr}
\renewcommand{\leq}{\leqslant}
\renewcommand{\geq}{\geqslant}
\begin{document}
%

\title[Green functions in small characteristic]{Computing
Green functions in small characteristic}
\author[Meinolf Geck]{Meinolf Geck}
\address{IAZ - Lehrstuhl f\"ur Algebra, Universit\"at Stuttgart, 
Pfaffenwaldring 57, D--70569 Stuttgart, Germany}
\email{meinolf.geck@mathematik.uni-stuttgart.de}

\subjclass{Primary 20C33; Secondary 20G40}
\keywords{Finite groups of Lie type, Green functions, character sheaves}
\date{April 10, 2019}

\dedicatory{To the memory of Kay Magaard}

\begin{abstract} 
Let $G(q)$ be a finite group of Lie type over a 
field with $q$ elements, where $q$ is a prime power. The Green functions of 
$G(q)$, as defined by Deligne and Lusztig, are known in \textit{almost} 
all cases by work of Beynon--Spaltenstein, Lusztig und Shoji. Open cases 
exist for groups of exceptional type ${^2\!E}_6$, $E_7$, $E_8$ in small 
characteristics. We propose a general method for dealing with these cases, 
which procedes by a reduction to the case where $q$ is a prime and then 
uses computer algebra techniques. In this way, all open cases in type 
${^2\!E}_6$, $E_7$ are solved, as well as at least one particular open
case in type $E_8$. 
\end{abstract}
\maketitle

\section{Introduction} \label{sec0}

Let $G(q)$ be a group of Lie type over a finite field with $q$ elements. 
We are concerned with the problem of computing the Green functions of 
$G(q)$, as defined by Deligne and Lusztig \cite{DeLu}. This is an important
and essential part of the more general problem of computing the whole 
character table of $G(q)$; see Carter's book \cite{C2} for further background.
There is a long tradition of work on Green functions; the principal 
ideas and methods, which remain valid and state of the art as of today, are 
summarized in Shoji's survey \cite{S1} from 1987. At that point, the Green 
functions where known in all cases where $q$ is a power of a good prime for 
$G$, or where $q$ is arbitrary, $G$ is of small rank and the whole table
of unipotent character values is available (like for $G_2$, ${^3\!D}_4$,
${^2\!B}_2$, ${^2\!G}_2$). Subsequently, explicit results for $F_4$, 
${^2\!F}_4$, $E_6$ in small characteristics were obtained by Malle 
\cite{Ma90}, \cite{Mal1} and Porsch \cite{Por}.

Regarding the general theory, it was first shown by Lusztig \cite{L5} 
(with some mild restrictions on~$q$) and then by Shoji \cite{S2},
\cite{S3} (in complete generality) that the original Green functions of 
\cite{DeLu} can be identified with another type of Green functions defined 
in terms of Lusztig's character sheaves \cite{Lintr}. This provides new,
extremely powerful tools. In this setting, groups of classical type in 
characteristic~$2$ are dealt with by Shoji \cite{S6}. Thus, the remaining 
open cases are as follows:
\begin{equation*}
{^2\!E}_6(3^m), \quad E_7(2^m),\quad E_7(3^m),
\quad E_8(2^m), \quad E_8(3^m),\quad E_8(5^m)\tag{$\heartsuit$}
\end{equation*}
for any $m\geq 1$. (See Marcelo--Shinoda \cite{MaSh} for some comments
about the Green functions of $F_4(3^m)$.) In principle, one could try to 
deal with these cases by similar methods as in the papers by Malle and 
Porsch mentioned above; however, these involve the technically complicated 
and unpleasant task of explicitly inducing class functions from proper 
subgroups. In this paper, we use another approach, similar to that in 
\cite{pbad}. By \cite[Theorem~3.7]{hartur}, the computation of the Green 
functions of $G(q)$, where $q=p^m$ with $m\geq 1$, can be reduced to the 
base case where $m=1$ which amounts to just six individual cases which can 
be addressed by computer algebra methods. In this way, we will solve all the
open cases for the groups of type ${^2\!E}_6$, $E_7$ in the list 
($\heartsuit$), as well as one particular case for type $E_8$ in 
characteristic~$2$.

In Section~\ref{sec1}, we review the general plan for computing 
Green functions, which reduces matters to the determination  of 
certain ``$Y$-functions''. In Section~\ref{sec2}, we discuss a number of 
techniques for determining these functions. Consequently, we obtain a method
for solving the remaining open problems which relies on knowing at 
least some values of the permutation character of $G(q)$ on the cosets 
of a Borel subgroup $B(q)\subseteq G(q)$. In order to compute such values,
we shall work with an explicit realisation of $G(q)$ as a matrix group. In
Section~\ref{sec3}, we advertise a ``canonical'' way of constructing $G(q)$,
following \cite{mylie}, \cite{L19}. Then the remaining sections deal with 
the discussion of the various cases in groups of type $F_4$, $E_6$, 
${^2\!E}_6$, $E_7$; see Section~\ref{sec7} for the particular case in 
type $E_8$.

We heavily rely on Michel's version of {\sf CHEVIE} \cite{jmich}, as well 
as programs (written by the author in {\sf GAP} \cite{gap}) implementing 
the constructions in Section~\ref{sec3}. As far as the remaining open cases
in type $E_8$ are concerned, it seems that the above method might work
in principle, but more sophisticated algorithms will certainly be required.
(For example, one could replace the Borel subgroup $B(q)$ by a parabolic 
subgroup of $G(q)$.) This will be discussed elsewhere. 

The main computational challenge of our approach is the explicit
computation of the values of the above-mentioned permutation character 
of $G(q)$. For this purpose, we need to count the (left) cosets of $B(q)$
that are fixed by a given element $g\in G(q)$. But, because of the sheer 
size of the groups in question (e.g., for $G(q)=E_7(3)$ we have $[G(q):B(q)]
\approx 17\cdot 10^{30}$), it is entirely impossible to run through the 
complete list of cosets. Now a crucial feature of our approach is that, 
typically, we only need to obtain \textit{lower bounds} 
for the number of fixed points, and this can be exploited as follows. 
By the sharp form of the Bruhat decomposition, we have a partition 
\begin{center}
$G(q)=\coprod_{w\in W} B(q)wB(q)$,
\end{center} 
where $W$ is the Weyl group of $G(q)$ and each double coset $B(q)wB(q)$ 
contains precisely $q^{l(w)}$ left $B(q)$-cosets; here, $l(w)$ is the length 
of~$w$. Now we simply begin with various elements $w\in W$ of relatively
small length, run through the left cosets that are contained in $B(q)wB(q)$, 
and check if they are fixed by~$g$ or not. In a sense, we were just lucky
because in all cases that we consider, this is sufficient to reach the 
desired lower bounds for the total number of fixed points~---~and there
are cases where we never reached the exact total number, even after weeks 
or months of computations. (We will indicate the maximum length required 
for Weyl group elements in all cases in Sections~\ref{sec4}--\ref{sec7}.)

\medskip
\textit{Acknowledgements}. The author is indebted to Gunter Malle for
a careful reading of the manuscript and for a number of useful comments. 
This work is a contribution to the SFB-TRR 195 ``Symbolic Tools in 
Mathematics and their Application'' of the German Research Foundation (DFG). 

\section{On the computation of Green functions} \label{sec1}

Let $p$ be a prime and $k=\overline{\F}_p$ be an algebraic closure of the
field with $p$ elements. Let $G$ be a connected reductive algebraic group 
over $k$ and assume that $G$ is defined over the finite subfield $\F_q
\subseteq k$, where $q=p^m$ for some $m\geq 1$. Let $F\colon G\rightarrow 
G$ be the corresponding Frobenius map. Let $B_0 \subseteq G$ be an 
$F$-stable Borel subgroup and $T_0\subseteq B_0$ be an $F$-stable maximal
torus. Let $W=N_G(T_0)/T_0$ be the corresponding Weyl group. 
For each $w\in W$, let $R_w$ be the virtual representation of the finite 
group $G^F$ defined by Deligne--Lusztig \cite[\S 1]{DeLu}. (In the setting 
of \cite[\S 7.2]{C2}, we have $\mbox{Tr}(g,R_w)=R_{T_w,1}(g)$ for $g\in G^F$,
where $T_w\subseteq G$ is an $F$-stable maximal torus obtained from $T_0$ 
by twisting with~$w$, and $1$ stands for the trivial character of $T^F$.) 
This construction is carried out over $\overline{\Q}_\ell$, an algebraic
closure of the $\ell$-adic numbers where $\ell$ is a prime not equal to~$p$.
The corresponding Green function is defined by 
\[ Q_w\colon G_{\text{uni}}^F \rightarrow \overline{\Q}_\ell,
\qquad u \mapsto \mbox{Tr}(u,R_w),\]
where $G_{\text{uni}}$ denotes the set of unipotent elements of $G$.
It is known that $Q_w(u)\in \Z$ for all $u\in G_{\text{uni}}^F$;
see \cite[\S 7.6]{C2}. So the character formula \cite[7.2.8]{C2} shows
that we also have $\mbox{Tr}(g,R_w)\in \Z$ for all $g\in G^F$.
The general plan for computing the values of $Q_w$ is explained in
\cite[Chap.~24]{L2e}, \cite[\S 5]{S1}, \cite[1.1--1.3]{S6a} 
(even for generalised Green functions, which we will not consider here).
In order to be able to address the main open issues, we will have to go 
through some of the steps of that plan, where we streamline the exposition
as much as possible and concentrate on the algorithmic aspects.

\begin{rem} \label{sub21}
The Frobenius map $F$ induces an automorphism of $W$ which we denote by
$\gamma\colon W\rightarrow W$. Let $\Irr(W)$ be the set of irreducible
representations of $W$ over $\overline{\Q}_\ell$ (up to isomorphism). Let
$\Irr(W)^\gamma$ be the set of all those $E\in \Irr(W)$ that are
$\gamma$-invariant, that is, there exists a bijective linear map $\sigma_E
\colon E\rightarrow E$ such that $\sigma_E {\circ} w=\gamma(w){\circ} \sigma_E
\colon E\rightarrow E$ for all $w\in W$. Note that $\sigma_E$ is only
unique up to scalar multiples but, if $\gamma$ has order $d\geq 1$, then
one can always find $\sigma_E$ such that
\[\sigma_E^d=\mbox{id}_E\qquad\mbox{and}\qquad \mbox{Tr}(\sigma_E{\circ} w,
E) \in \Z\quad\mbox{for all $w\in W$};\]
see \cite[3.2]{L1}. In what follows, we assume that a fixed choice of
$\sigma_E$ satisfying the above conditions has been made for each $E\in
\Irr(W)^\gamma$. (For example, one could take the ``preferred'' choice for
$\sigma_E$ specified by Lusztig \cite[17.2]{L2d}.) 
\end{rem}

For $E\in \Irr(W)^\gamma$, the corresponding almost character is the class 
function $R_{E}\colon G^F\rightarrow \overline{\Q}_\ell$ defined by
\[ R_{E}(g):=\frac{1}{|W|} \sum_{w\in W} \mbox{Tr}(\sigma_E {\circ} w,
E)\mbox{Tr}(g,R_w) \qquad \mbox{for all $g\in G^F$}.\]
Since all the terms $\mbox{Tr}(\sigma_E {\circ} w,E)$ are integers, all the
values $R_{E}$ are in $\Q$. By \cite[3.9]{L1}, the above functions are 
orthonormal with respect to the standard inner product on class functions 
of $G^F$. Furthermore, by \cite[3.19]{cbms}, we have
\[ Q_w(u)=\sum_{E\in \Irr(W)^\gamma} \mbox{Tr}(\sigma_E 
{\circ} w,E) R_{E}(u)\qquad \mbox{for $w\in W$, $u\in 
G_{\text{uni}}^F$}.\]
Hence, knowing the values of all Green functions $Q_w$ is equivalent to
knowing the values of all $R_{E}$ on $G_{\text{uni}}^F$. We define the matrix
$\tilde{\Omega}=(\tilde{\omega}_{E',E})_{E',E \in \Irr(W)^\gamma}$ where
\[\tilde{\omega}_{E',E}:=\frac{1}{|W|} \sum_{w\in W} [G^F:T_w^F]\, 
\mbox{Tr}(\sigma_{E'}{\circ} w, E') \mbox{Tr}(\sigma_E{\circ} w,E)\in\Q;\]
here, $T_w\subseteq G$ denotes an $F$-stable maximal torus obtained from
$T_0$ by twisting with~$w$ and the maps $\sigma_E\colon E\rightarrow E$,
$\sigma_{E'}\colon E'\rightarrow E'$ are as above.

\begin{prop}[Orthogonality relations] \label{prop21} For $E,E'\in
\Irr(W)^\gamma$, we have
\[ \tilde{\omega}_{E',E}=\sum_{g\in G_{\operatorname{uni}}^F} R_{E'}(g) 
R_E(g).\]
\end{prop}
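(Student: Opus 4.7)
The strategy is to expand both sides and reduce to the classical orthogonality relations for Green functions on the unipotent variety. Substituting the definition of the almost characters $R_{E'}$ and $R_E$ into the right-hand side, and using that $R_w(u)=Q_w(u)$ for every $u\in G_{\operatorname{uni}}^F$, one obtains
\[
\sum_{u\in G_{\operatorname{uni}}^F}R_{E'}(u)R_E(u)=\frac{1}{|W|^2}\sum_{w,w'\in W}\mbox{Tr}(\sigma_{E'}{\circ}w',E')\,\mbox{Tr}(\sigma_E{\circ}w,E)\,\sum_{u}Q_{w'}(u)Q_w(u).
\]
The problem is thereby reduced to evaluating the inner sum $\sum_u Q_{w'}(u)Q_w(u)$ over a pair of Weyl group elements.

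The second step is to invoke the orthogonality of Green functions,
\[
\sum_{u\in G_{\operatorname{uni}}^F}Q_w(u)Q_{w'}(u)=[G^F:T_w^F]\cdot\bigl|\{x\in W\mid x^{-1}w\gamma(x)=w'\}\bigr|.
\]
This is a classical result; it can be extracted from the Deligne--Lusztig orthogonality of the virtual characters $R_w$ (see \cite[\S 7.5]{C2}) by isolating the unipotent support of $R_w R_{w'}$ via the Deligne--Lusztig character formula and the Jordan decomposition of elements of $G^F$.

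In the final step, one substitutes this expression and collapses the double sum. Replacing the summation over $w'$ by one over $x\in W$ with $w'=x^{-1}w\gamma(x)$, the trace $\mbox{Tr}(\sigma_{E'}{\circ}x^{-1}w\gamma(x),E')$ appears. The intertwining identity $\sigma_{E'}{\circ}x^{-1}=\gamma(x)^{-1}{\circ}\sigma_{E'}$ (a direct consequence of the defining relation $\sigma_{E'}{\circ}v=\gamma(v){\circ}\sigma_{E'}$) together with cyclic invariance of the trace gives $\mbox{Tr}(\sigma_{E'}{\circ}x^{-1}w\gamma(x),E')=\mbox{Tr}(\sigma_{E'}{\circ}w,E')$, independently of $x$. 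Hence the $x$-sum contributes a factor $|W|$, and what remains after cancellation is precisely $\tilde{\omega}_{E',E}$. The main technical ingredient is the orthogonality of Green functions used in the second step; I would cite it as a known fact rather than reprove it here, since its justification hinges on a separate analysis of the contributions from non-trivial semisimple parts in the Jordan decomposition of elements of $G^F$.
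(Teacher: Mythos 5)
Your proposal is correct and is essentially the argument the paper has in mind: the paper's proof simply says the identity is a formal consequence of the orthogonality relations for Green functions (Carter, Prop.~7.6.2), arguing as in Lusztig's CBMS notes \S 3.19, and you have spelled out exactly that formal manipulation — expand $R_{E'}$ and $R_E$ via their definitions, invoke Green-function orthogonality, reindex $w'=x^{-1}w\gamma(x)$, and use the intertwining relation for $\sigma_{E'}$ together with cyclicity of trace to collapse the $x$-sum to a factor $|W|$.
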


\begin{proof} Arguing as in \cite[3.19]{cbms}, the above relations are a 
formal consequence of the orthogonality relations for the Green 
functions $Q_w$ in \cite[Prop.~7.6.2]{C2}.
\end{proof}

Let $\cN_G$ be the set of all pairs $(C,\cE)$ where $C$ is a unipotent
class in $G$ and $\cE$ is a $G$-equivariant irreducible
$\overline{\Q}_\ell$-local system on $C$ (up to isomorphism). The Springer 
correspondence defines an injective map
\[ \iota_G\colon \Irr(W)\hookrightarrow \cN_G; \]
see Lusztig \cite{LuIC}, \cite[Chap~24]{L2e}, and the references there.
Let $E\in \Irr(W)$ and $\iota_G(E)=(C,\cE)\in\cN_G$. Then we define
\[ d_E:=(\dim G-\dim C-\dim T_0)/2.\]
Note that $\dim C_G(g)\geq \dim T_0$ for $g\in G$. Furthermore,
$d_E\in\Z_{\geq 0}$ since $\dim G-\dim T_0$ is always even and so is
$\dim C$; see  \cite[\S 5.10]{C2} and the references there. 
Now assume that $E\in \Irr(W)^\gamma$. Then $F(C)=C$ and
$F^*\cE\cong \cE$. We define a function 
\[Y_E\colon G_{\text{uni}}^F \rightarrow \Q\]
as follows. Let $g\in G_{\text{uni}}^F$. Then we set $Y_E(g):=0$ if 
$g\not\in C$, and 
\[ Y_E(g):=q^{-d_E}R_E(g) \qquad\mbox{if $g\in C^F$}.\]
Now we can state the following fundamental result.

\begin{thm}[Lusztig, Shoji] \label{thmls} In the above setting, the 
following hold.
\begin{itemize}
\item[(a)] The functions $\{Y_E\mid E \in \Irr(W^\gamma)\}$ are 
integer-valued and linearly independent. 
\item[(b)] There are unique coefficients $p_{E',E}\in\Z$ ($E',E\in
\Irr(W)^\gamma)$ such that
\[ R_{E}|_{G_{\operatorname{uni}}^F}=\sum_{E'\in \Irr(W)^\gamma}  q^{d_E}\,
p_{E',E} Y_{E'}\qquad\mbox{for all $E\in \Irr(W)^\gamma$}.\]
\item[(c)] We have $p_{E,E}=1$; furthermore, $p_{E',E}=0$ if 
$E'\neq E$ and $d_{E'}\geq d_E$.
\end{itemize}
\end{thm}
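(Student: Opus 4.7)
The plan is to interpret both $R_E|_{G_{\text{uni}}^F}$ and $Y_E$ through Lusztig's theory of character sheaves, and then read (a)--(c) off the standard triangularity behaviour of intersection cohomology on closures of unipotent classes.

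For each $E\in\Irr(W)^\gamma$ with Springer datum $\iota_G(E)=(C_E,\cE_E)$, I would attach the character sheaf $A_E:=\mathrm{IC}(\bar C_E,\cE_E)$ extended by zero on $G\setminus\bar C_E$. By the Lusztig--Shoji identification of $R_E$ with a characteristic function of $A_E$ (which underlies the whole setup), equipped with an $F$-equivariant structure matched with the chosen $\sigma_E$, the function $Y_E$ becomes the characteristic function of the local system $\cE_E$ itself, supported on the single stratum $C_E^F$; the factor $q^{-d_E}$ in the definition exactly absorbs the perverse shift $\dim C_E$ together with the global normalization involving $\dim G-\dim T_0$.

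From this dictionary, part~(a) reduces to the fact that the trace of $F$ on stalks of the $G$-equivariant irreducible local system $\cE_E$ is integer-valued -- a standard property because these local systems arise in Springer cohomology and carry a natural $\Z$-structure, provided the preferred $F$-equivariant structure from Remark~\ref{sub21} is used -- together with the observation that distinct $Y_E, Y_{E'}$ have either disjoint support (different classes) or are orthogonal (same class but non-isomorphic local systems, by Schur). For (b) and (c), the restriction of $A_E$ to the open stratum $C_E$ is just $\cE_E$ up to a shift, so on $C_E^F$ one reads off $p_{E,E}=1$; on smaller strata $C_{E'}\subsetneq\bar C_E$, the higher stalk cohomologies of $A_E$ decompose as $G$-equivariant local systems on $C_{E'}$, and expanding those in the $\{Y_{E''}\}$ with $C_{E''}=C_{E'}$ produces integer coefficients (namely, multiplicities of irreducibles, weighted by integer powers of~$q$). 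Since $C_{E'}\subsetneq\bar C_E$ forces $\dim C_{E'}<\dim C_E$ and hence $d_{E'}>d_E$, the triangularity in~(c) follows; the remaining case $d_{E'}=d_E$ with $E'\neq E$ would require $C_{E'}=C_E$ but $\cE_{E'}\not\cong\cE_E$, which is excluded by the orthogonality observation above. Uniqueness in (b) is then immediate from the linear independence in (a).

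The real obstacle is not the structural argument but the calibration of $F$-equivariant structures. One must pin down the isomorphism $F^*\cE_E\cong\cE_E$ so that it is compatible with Lusztig's preferred extension $\sigma_E$ of Remark~\ref{sub21}; only with this precise choice does the leading coefficient come out to $+1$ (rather than to a sign or a root of unity), and only then is $Y_E$ genuinely $\Z$-valued rather than merely valued in cyclotomic integers. This is the subtle point at which the cited deep results of Lusztig \cite{L5} and Shoji \cite{S2,S3} genuinely enter; bypassing them, one could only assert triangularity up to undetermined scalars.
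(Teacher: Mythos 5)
Your overall strategy -- interpret $R_E|_{G_{\mathrm{uni}}^F}$ and $Y_E$ via character sheaves and read off triangularity from the stalk behaviour of intersection cohomology complexes on closures of unipotent classes -- is indeed the same strategy the paper uses; the paper's own proof simply cites \cite[Chap.~24]{L2e} after invoking the Lusztig--Shoji identification \cite{L5}, \cite{S2}, \cite{S3}, and your sketch reconstructs the content of those citations. However, there are two places where your reconstruction goes astray.

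First, your argument for integrality in part (a) is not right as stated. You claim the stalkwise trace of Frobenius on $\cE_E$ is integer-valued ``because these local systems arise in Springer cohomology and carry a natural $\Z$-structure, provided the preferred $F$-equivariant structure is used.'' There is no such natural $\Z$-structure available in the relevant generality: $\cE_E$ corresponds to an irreducible representation of the finite group $A(u_0)$, and these need not be realizable over $\Q$, let alone $\Z$ (they are for $\fS_r$, but not, say, for cyclic $A(u_0)$ of order $>2$); moreover the compatibility of any putative $\Z$-structure with the chosen $F$-equivariant structure is exactly what would need to be proved, not assumed. The correct and much simpler argument -- the one underlying the paper's proof -- is to combine two independent facts: $Y_E$ is $\Q$-valued by its very definition $Y_E(g)=q^{-d_E}R_E(g)$ together with the observation made in Section~\ref{sec1} that $R_E$ takes values in $\Q$; and $Y_E(g)=\mathrm{Tr}(\psi_g,\cE_g)$ with $\psi_g$ of finite order, hence a sum of roots of unity and therefore an algebraic integer. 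An algebraic integer lying in $\Q$ lies in $\Z$.

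Second, you omit the cleanness hypothesis, which is what makes the triangularity argument go through in the form stated. The triangularity in (b), (c) is not just ``IC stalks vanish outside the closure'': one must know that the restriction of the relevant character sheaf $A_E$ to lower strata decomposes without contributions from cuspidal pairs that sit outside the Springer correspondence, and this is exactly Lusztig's Theorem~24.4 of \cite{L2e}, whose hypothesis (cleanness of cuspidal character sheaves) is nontrivial; the paper explicitly notes that it is supplied by \cite{L10} (or, for ordinary Green functions, by \cite[\S 3]{aver}). Your sketch silently assumes what cleanness is needed to guarantee. Finally, the phrase ``only with this precise choice does the leading coefficient come out to $+1$'' conflates two things: $p_{E,E}=1$ is forced by the normalization $q^{-d_E}$ built into the definition of $Y_E$ (once one has linear independence and the support triangularity), whereas the choice of $F$-equivariant structure governs the sign $\delta_E=\pm1$ discussed in Remark~\ref{lu11}, which is a separate normalization issue and does not affect the coefficients $p_{E',E}$ themselves.
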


\begin{proof} By Lusztig \cite{L5} (with some mild restrictions on~$p,q$) 
and Shoji \cite{S2}, \cite{S3} (in complete generality), the original Green 
functions of \cite{DeLu} can be identified with another type of Green 
functions defined in terms of character sheaves \cite{Lintr}. 
So we can place ourselves in the setting of \cite[Chap.~24]{L2e}. Thus, the 
restrictions of the functions $R_E$ to $G_{\text{uni}}^F$ are indeed the 
characteristic functions of the character sheaves $A_i$ in 
\cite[24.2]{L2e}. Furthermore, the functions $Y_E$ defined above are
indeed equal to the functions $Y_i$ in \cite[24.2.3]{L2e}; thus, 
if $\iota_G(E)=(C,\cE)$, then we have 
\[ Y_E(g)=\mbox{Tr}(\psi_g,\cE_g)\qquad \mbox{for $g\in C^F$},\]
where $\cE_g$ is the stalk of $\cE$ at~$g$ and $\psi_g\colon 
\cE_g\rightarrow\cE_g$ is a certain linear map of finite order. In
particular, this shows that the values of $Y_E$ are algebraic integers. 
Then all of the above statements follow from 
\cite[24.2, 24.3, 24.5]{L2e} and \cite[Theorem~24.4]{L2e}. Note 
that the hypotheses of \cite[Theorem~24.4]{L2e} (``cleanness'') are always 
satisfied by the main result of \cite{L10}. (Since we are only dealing 
with Green functions of $G^F$, and not with generalised Green functions, 
it would actually be sufficient to refer to \cite[\S 3]{aver} instead
 of \cite{L10}; see also \cite[\S 2]{hartur} where all of the above are 
discussed in somewhat more detail.)
\end{proof}

\begin{rem} \label{sub26} Lusztig \cite[\S 24.4]{L2e} describes a purely 
combinatorial algorithm for computing the coefficients $p_{E',E}$, which 
modifies and simplifies an earlier algorithm of Shoji \cite[\S 5]{S1}.
For this purpose, we define three matrices
\[ P=(p_{E',E}),\qquad  \Omega= (\omega_{E',E}), \qquad 
\Lambda=(\lambda_{E',E}), \]
where, in each case, the indices run over all $E',E\in\Irr(W)^\gamma$. Here,
$p_{E',E}$ are the coefficients in Theorem~\ref{thmls}; furthermore,
\[\omega_{E',E}:=q^{-d_E-d_{E'}} \tilde{\omega}_{E',E}\qquad\mbox{and}
\qquad \lambda_{E',E}:=\sum_{g\in G_{\text{uni}}^F} Y_{E'}(g)Y_E(g).\]
Then the orthogonality relations in Proposition~\ref{prop21} give rise
to the matrix identity 
\[P^{\operatorname{tr}}\cdot \Lambda\cdot P=\Omega; \qquad\mbox{see 
Lusztig \cite[24.9]{L2e}, Shoji \cite[5.6]{S1}}.\]
In general, given the right hand side $\Omega$, such a system of equations 
will not have a unique solution for $P,\Lambda$. But if we take into account 
the additional information on the coefficients $p_{E',E}$ in 
Theorem~\ref{thmls}(c), then it does have a unique solution, which can be 
found by a recursive algorithm. 
\end{rem}

\begin{rem} \label{jean} The Springer correspondence is explicitly known 
in all cases; see the tables in Carter \cite[\S 13.3]{C2}, Lusztig 
\cite{LuIC}, Lusztig--Spaltenstein \cite{LuSp1}, Spaltenstein \cite{Spa1} 
(and the further references there). It can be obtained electronically, via 
tables or combinatoral algorithms, through Michel's version of the
{\sf CHEVIE} system \cite{jmich}; see the function {\tt UnipotentClasses}.
There is also an implementation of the algorithm in Remark~\ref{sub26};
see the function {\tt ICCTable}. Examples will be given below.
\end{rem}

\begin{exmp} \label{sub26a} Let $E_1\in \Irr(W)$ be the trivial 
representation of $W$. Clearly, we have $E_1\in \Irr(W)^\gamma$;
furthermore, we can certainly choose $\sigma_{E_1}\colon E_1\rightarrow E_1$ 
to be the identity map.  Then, with this choice, we have
\[ R_{E_1}(g)=1 \qquad\mbox{for all $g\in G^F$};\]
see \cite[7.14.1]{DeLu} or \cite[Prop~7.4.2]{C2}. It is also known
that $\iota_G(E_1)=(C_{\text{reg}},\overline{\Q}_\ell)$ where $C_{\text{reg}}$
is the class of regular unipotent elements (see, e.g., \cite[1.1]{Spa1});
hence, we have 
\[ d_{E_1}=0\qquad\mbox{and}\qquad R_{E_1}|_{G_{\text{uni}}^F}=
\sum_{E'\in \Irr(W)^\gamma} p_{E',E}Y_{E'}.\]
Combining the above two expressions for $R_{E_1}$, the 
functions $Y_{E'}$ are determined for all $E'\in \Irr(W)^\gamma$ such that 
$p_{E',E_1}\neq 0$. Indeed, if $p_{E',E_1}\neq 0$, then let us write 
$\iota_G(E')=(C',\cE')$ where $C'$ is an $F$-stable unipotent class. Since 
the functions $\{Y_E \mid E\in \Irr(W^\gamma)\}$ are linearly independent, 
we conclude that 
\[ p_{E',E_1}Y_{E'}(g)=1 \qquad\mbox{for all $g\in C'^F$}.\]
Since $p_{E',E}\in\Z$ and $Y_{E'}(g)\in \Z$ for all $g\in G^F$, we either 
have $Y_{E'}(g)=1$ for all $g\in C'^F$, or $Y_{E'}(g)=-1$ for all $g\in C'^F$,
where the sign is determined by $p_{E',E_1}$.
\end{exmp}

\begin{table}[htbp] \caption{The Springer correspondence and $p_{E',E}$ 
for $G_2$, $p=3$} \label{tabG23} 
\begin{center} $\begin{array}{cccc}\hline E & d_E& A(u) &\iota_G(E) \\ \hline 
E_{1,6} & 6 & \{1\}& (1,\overline{\Q}_\ell) \\ 
E_{1,3}' & 3 & \{1\}&  ((\tilde{A}_1)_3,\overline{\Q}_\ell) \\
E_{1,3}'' & 3& \{1\} &(A_1,\overline{\Q}_\ell) \\
E_{2,2} & 2 & \{1\}& (\tilde{A}_1,\overline{\Q}_\ell) \\
E_{2,1} & 1 & \Z/2\Z &(G_2(a_1),\overline{\Q}_\ell) \\
E_{1,0} & 0& \Z/3\Z &(G_2,\overline{\Q}_\ell) \\ \hline\end{array}\qquad
\begin{array}{c@{\hspace{5pt}}c@{\hspace{5pt}}c@{\hspace{5pt}}
c@{\hspace{5pt}}c@{\hspace{5pt}}c@{\hspace{5pt}}c}
\hline p_{E',E} & E_{1,6} & E_{1,3}' & E_{1,3}'' & 
E_{2,2} & E_{2,1} & E_{1,0}\\\hline
E_{1,6} & 1 & 1 & 1 & q^2{+}1 & q^4{+}1 & 1 \\ 
E_{1,3}' & 0 & 1 & 0 & 1 & 1 & 1 \\
E_{1,3}'' & 0 & 0 & 1 & 1 & 1 & 1 \\
E_{2,2} & 0 & 0 & 0 & 1 & 1 & 1\\
E_{2,1} & 0 & 0 & 0 & 0 & 1 & 1\\
E_{1,0} & 0 & 0 & 0 & 0 & 0 & 1 \\ \hline\end{array}$
\end{center}
\end{table}

\begin{exmp} \label{sub27} Let $G$ be of type $G_2$ and $p=3$. In this 
case, $W=\langle s_1, s_2\rangle$ where $s_1$ is the reflection 
corresponding to a long simple root and $s_1$ is the reflection 
corresponding to a short simple root. We have
\[ \Irr(W)=\{E_{1,0},E_{1,6},E_{1,3}',E_{1,3}'',E_{2,1},E_{2,2}\}\]
where $E_{1,0}$ is the trivial representation, $E_{1,6}$ is the sign
representations, $E_{1,3}'$, $E_{1,3}''$ are two further one-dimensional
representations such that 
\[ \mbox{Tr}(s_1,E_{1,3}')=\mbox{Tr}(s_2,E_{1,3}'')=-1 \quad\mbox{and}
\quad \mbox{Tr}(s_1, E_{1,3}'')=\mbox{Tr}(s_2, E_{1,3}')=1;\]
finally, $E_{2,1}$ is the standard reflection representation and $E_{2,2}$ is
a further two-dimensional representation. The Frobenius map $F$ acts 
trivially on $W$ and so $\gamma=\mbox{id}_W$. In Michel's version of
{\sf CHEVIE} \cite{jmich}, we obtain the information on unipotent classes 
and the Springer correspondence as follows.

\begin{verbatim}
    gap>  W := CoxeterGroup("G",2);; 
    gap>  uc := UnipotentClasses(W,3);;  # p=3
    gap>  Display(uc);  Display(ICCTable(uc));
\end{verbatim}

The information is summarized in Table~\ref{tabG23} (see also
\cite[p.~329]{Spa1}). There are $6$ unipotent classes, denoted by
$G_2, G_2(a_1)$, $\tilde{A}_1$, $A_1$, $(\tilde{A}_1)_3$, $1$.
Since $\gamma=\mbox{id}_W$, we also have  $\sigma_E=\mbox{id}_E$ for
all $E\in \Irr(W)$. Thus, we obtain explicit expressions
\[ R_E|_{G_{\text{uni}}^F}=\sum_{E'\in \Irr(W)} q^{d_E}\,p_{E',E}Y_{E'}
\qquad\mbox{for all $E\in \Irr(W)$}.\]
It remains to determine  the values of $Y_E$ for all $E\in \Irr(W)$. In
the present case, this is easily done using Example~\ref{sub26a}. 
Indeed, since all entries in the last column of Table~\ref{tabG23} are 
equal to~$1$, we have $R_{E_{1,0}}=\sum_{E\in \Irr(W)} Y_{E}$. Hence, 
each function $Y_{E}$ is identically~$1$ on $C^F$ where $\iota_G(E)=(C,
\overline{\Q}_\ell)$. 
\end{exmp}

In general, the determination of the functions $Y_E$ is a very subtle
problem. In order to solve it, one either needs further geometric 
information (as, for example, in Beynon--Spaltenstein \cite[\S 3, 
Case~V]{BeSp}, Shoji \cite[\S 1]{S6}) or some additional information about 
character values of $G^F$, which was readily availaible in the above 
example but may require much more work in other cases (as, for example,
in Malle \cite{Mal1}). The following discussion, which is inspired by the
approach of Marcelo--Shinoda \cite{MaSh}, will turn out to be very
useful in later sections.

\begin{rem} \label{rem31}
For $w=1$, the virtual representation $R_1$ of Deligne--Lusztig is known
to be an actual representation, which is in fact the permutation
representation of $G^F$ on the cosets of $B_0^F$ (see \cite[1.5]{DeLu}
or \cite[7.2.4]{C2}). Thus, for any unipotent element $u\in G^F$, we have
\begin{align*}
Q_1(u)&=\mbox{Tr}(u,R_1)=|\{gB_0^F\in G^F/B_0^F\mid ugB_0^F=gB_0^F\}|\\
&= |\{gB_0^F\in G^F/B_0^F\mid g^{-1}ug\in B_0^F\}|=
\sum_{1\leq i\leq r}\frac{|C_G(u)^F|}{|C_{B_0}(u_i)^F|}
\end{align*}
where $u_1,\ldots,u_r\in B_0^F$ are representatives of the conjugacy
classes of $B_0^F$ that are contained in the $G^F$-conjugacy class of~$u$.
On the other hand, expressing $R_1$ as a linear combination of $R_E$'s
and then using Theorem~\ref{thmls}(b), we obtain that 
\[ Q_1(u)=\sum_{E'\in \Irr(W)^\gamma} \tilde{p}_{E'} Y_{E'}(u)
\quad\mbox{where}\quad \tilde{p}_{E'}:=\sum_{E\in\Irr(W)^\gamma}
q^{d_E} \mbox{Tr}(\sigma_E)\, p_{E',E}.\]
Thus, since the terms $\tilde{p}_{E'}$ are determined by the algorithm 
in Remark~\ref{sub26}, this yields conditions on the values of
the functions $Y_{E'}$, once we manage to obtain some information
about $Q_1(u)$ by other means. 
\end{rem}

In Sections~\ref{sec4}--\ref{sec6}, we will try to evaluate $Q_1(u)$ 
explicitly for certain unipotent elements~$u$ (see Example~\ref{expg2} 
below for a first illustration). For this purpose, we note that the 
formula for $Q_1(u)$ can be further refined using the Bruhat decomposition. 
Let $\Phi$ be the root system of $G$ with respect to $T_0$. Let $\Phi^+ 
\subseteq \Phi$ be the set of positive roots determined by the choice of 
$B_0$. Finally, let $\{\alpha_i\mid i \in I\} \subseteq \Phi^+$ be the 
corresponding set of simple roots, where $I$ is a finite indexing set. 
This defines a length function $l\colon W\rightarrow \Z_{\geq 0}$. For 
each $\alpha\in \Phi$, let $U_\alpha=\{x_\alpha(t)\mid t\in k\} \subseteq G$
be the corresponding root subgroup. For $w\in W$, we set 
\[ U_w:=\langle U_\alpha \mid \alpha \in \Phi_w^+\rangle\subseteq G
\qquad\mbox{where}\qquad \Phi_w^+:=\{\alpha
\in\Phi^+\mid w(\alpha)\in \Phi^-\};\]
we also fix a representative $\dot{w}$ of $w$ in $N_G(T_0)$. Here, we 
tacitly assume that  $\dot{w}$ is chosen such that $F(\dot{w})=\dot{w}$ 
whenever $\gamma(w)=w$, which is possible by \cite[p.~33]{C2}. Then
we have the following sharp form of the Bruhat decomposition.
\[ G=\coprod_{w\in W} U_w\dot{w}B_0 \qquad\mbox{(with uniqueness of
expression});\]
see \cite[Theorem~2.5.14 and Prop.~2.5.6]{C2}. Writing 
$\Phi_w^+=\{\beta_1,\ldots,\beta_l\}$ where $l=l(w)$, we actually have 
$U_w=U_{\beta_1}\cdots U_{\beta_l}$ with uniqueness
of expression.

\begin{lem} \label{rem31aa} Let $u\in G^F$ be unipotent. Then 
\[Q_1(u)=\sum_{w\in W,\gamma(w)=w} |Q_{1,w}(u)|\]
where $Q_{1,w}(u):=\{ v \in U_w^F\mid \dot{w}^{-1}v^{-1}uv\dot{w} 
\in B_0^F\}$ for all $w \in W$ such that $\gamma(w)=w$.
\end{lem}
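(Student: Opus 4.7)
The plan is to refine the sum in Remark~\ref{rem31} by indexing the left $B_0^F$-cosets of $G^F$ according to which Bruhat cell they lie in.

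First, I would reinterpret the counting. By Lang--Steinberg applied to the connected group $B_0$, the natural map $G^F/B_0^F \to (G/B_0)^F$ is a bijection, so the quantity
\[Q_1(u)=|\{gB_0^F\in G^F/B_0^F\mid g^{-1}ug\in B_0^F\}|\]
from Remark~\ref{rem31} equals the number of $F$-fixed points $xB_0\in (G/B_0)^F$ satisfying $u\cdot xB_0=xB_0$ (equivalently, $x^{-1}ux\in B_0$, where one may take $x\in G^F$).

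Next, I would use the sharp Bruhat decomposition $G=\coprod_{w\in W} U_w\dot{w}B_0$ to partition $G/B_0$ into cells $\mathcal{C}_w:=U_w\dot{w}B_0/B_0$, each in bijection with $U_w$ via $v\mapsto v\dot{w}B_0$. The key geometric step is to identify the $F$-fixed points of each cell. Since $F$ permutes the root subgroups by the rule $F(U_\alpha)=U_{\gamma(\alpha)}$ (up to twisting parameters), we have $F(U_w)=U_{\gamma(w)}$; moreover, the representatives $\dot{w}$ were chosen so that $F(\dot{w})=\dot{w}$ whenever $\gamma(w)=w$. Therefore, if $v\dot{w}B_0\in \mathcal{C}_w$, then $F(v\dot{w}B_0)=F(v)\dot{\gamma(w)}B_0\in \mathcal{C}_{\gamma(w)}$, and by the uniqueness of expression in the Bruhat decomposition, the equality $F(v\dot{w}B_0)=v\dot{w}B_0$ forces $\gamma(w)=w$ and $F(v)=v$. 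Consequently,
\[(G/B_0)^F=\coprod_{w\in W,\,\gamma(w)=w}\{v\dot{w}B_0\mid v\in U_w^F\}.\]

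Finally, I would rewrite the fixed-point condition on each piece: for $g=v\dot{w}$ with $v\in U_w^F$, the condition $g^{-1}ug\in B_0^F$ becomes $\dot{w}^{-1}v^{-1}uv\dot{w}\in B_0$, and since $u,v,\dot{w}$ are all $F$-stable this element is automatically $F$-fixed, so it lies in $B_0$ if and only if it lies in $B_0^F$. Summing the counts over all $\gamma$-fixed $w\in W$ yields the claimed formula $Q_1(u)=\sum_{w\in W,\gamma(w)=w}|Q_{1,w}(u)|$.

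The only delicate point is step two, namely matching $F$-fixed points cell by cell; the uniqueness in the sharp Bruhat decomposition together with the compatible choice of $\dot{w}$ when $\gamma(w)=w$ handles this cleanly, so no serious obstacle is anticipated.
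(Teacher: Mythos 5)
Your proposal is correct and takes essentially the same route as the paper: both parametrize the left $B_0^F$-cosets by the sharp Bruhat decomposition and count fixed cosets cell by cell. The paper simply cites Carter (\S 2.9) for the finite form $G^F=\coprod_{w:\gamma(w)=w}U_w^F\dot{w}B_0^F$, whereas you re-derive this via Lang--Steinberg and an analysis of $F$-fixed points of Bruhat cells, which is a sound expansion of what the cited reference provides.
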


\begin{proof} By \cite[\S 2.9]{C2}, we also have a sharp form of the 
Bruhat decomposition for the finite group $G^F$, such that $G^F=
\coprod_{w} U_w^F\dot{w}B_0^F$, where the union runs over all $w\in W$ 
such that $\gamma(w)=w$. Inverting elements, we see that 
\[ \{v\dot{w}^{-1}\mid w \in W, \gamma(w)=w, v \in U_w^F \}\]
is a complete set of representatives of the cosets $\{gB_0^F\mid 
g\in G^F\}$. This yields the above formula.
\end{proof}

\begin{rem} \label{rem31f} As far as explicit computations using a computer
are concerned, the above formula means that
\[Q_1(u)\geq \sum_w |Q_{1,w}(u)|\]
where $w$ runs over all elements of $W$ of any given bounded length. Note 
that $|U_w^F|=q^{l(w)}$ (see \cite[p.~74]{C2}) which quickly becomes very 
large with growing $l(w)$. Thus, we can only reasonably work with bounds 
like $l(w)\leq 25$ (if $q=2$) or $l(w)\leq 16$ (if $q=3$) on a standard
computer. In any case, the above estimate will be crucial in our discussion 
of groups of exceptional type in Sections~\ref{sec4}--\ref{sec7}. 
\end{rem}

\section{On the determination of the functions $Y_E$} \label{sec2}

We will assume from now on that $G$ is simple and that the Frobenius 
map $F\colon G\rightarrow G$ is given by 
\[ F=\tilde{\gamma}\circ F_p^m=F_p^m \circ \tilde{\gamma}\qquad
(m\geq 1)\]
where $\tilde{\gamma}\colon G \rightarrow G$ is an automorphism of finite 
order (leaving $T_0,B_0$ invariant) and $F_p\colon G \rightarrow G$ is 
a Frobenius map corresponding to a split $\F_p$-rational structure, such 
that $F_p$ commutes with $\tilde{\gamma}$ and $F_p(t)=t^p$ for all
$t\in T_0$. Thus, $G^F$ is an untwisted or twisted Chevalley group, as in
Steinberg \cite{St}. Note that $\tilde{\gamma}$ induces an automorphism 
of $W$ which is just the automorphism $\gamma\colon W\rightarrow W$ 
induced by $F$ considered earlier. 

\begin{rem} \label{rem20} It is known that all unipotent classes of $G$
are $F_p$-stable (since, in each case, representatives of the classes are
known which lie in $G^{F_p}=G(\F_p)$; see, e.g., Liebeck--Seitz \cite{LiSe}).
Let $C$ be an $F$-stable unipotent class. We shall also make the following
assumption.
\begin{itemize}
\item[($\clubsuit$)] There exists an element $u_0\in C^F$ such that $F$
acts trivially on the finite group of components $A(u_0):=
C_G(u_0)/C_G^\circ(u_0)$.
\end{itemize}
If ($\clubsuit$) holds, then there is a bijective
correspondence between the conjugacy classes of $A(u_0)$ and the conjugacy
classes of $G^F$ that are contained in the set $C^F$ (see, e.g.,
\cite[Lemma~2.12]{LiSe}). For $a\in A(u_0)$, an element in the corresponding
$G^F$-conjugacy class is given by $u_a=hu_0h^{-1}$ where $h\in G$ is such
that $h^{-1}F(h) \in C_G(u_0)$ maps to $a$ under the natural homomorphism
$C_G(u_0)\rightarrow A(u_0)$. (The existence of $h$ is guaranteed by
Lang's Theorem; note that $h$ is not unique but $u_a=hu_0h^{-1}$ is
well-defined up to $G^F$-conjugacy.)
\end{rem}

\begin{rem} \label{lu11} Let $E\in \Irr(W)^\gamma$ and $\iota_G(E)=(C,\cE)
\in \cN_G$, such that $F(C)=C$ and $F^*\cE\cong \cE$. Now let us fix an 
element $u_0\in C^F$ as in ($\clubsuit$), such that
$F$ acts trivially on $A(u_0)$. Then it is known (see Lusztig 
\cite[19.7]{Ldisc4}) that there is a natural $A(u_0)$-module structure on 
the stalk $\cE_{u_0}$; in fact, we have $\cE_{u_0} \in \Irr(A(u_0))$
and there is a root of unity $\delta_E\in\overline{\Q}_\ell$ such that 
\[  Y_E(u_a)= \delta_E\operatorname{Tr}(a, \cE_{u_0})\qquad
\mbox{for all $a\in A(u_0)$}.\]
Since the values of $Y_E$ are integers, it easily follows that
$\delta_E=\pm 1$. (See \cite[Lemma~3.3]{hartur} for further details.)
Note that $\mbox{Tr}(a,\cE_{u_0})$ is just an entry in the ordinary
character table of $A(u_0)$. In particular, if $a=1$, then $u_1$ is
$G^F$-conjugate to $u_0$ and so $\delta_E$ is determined by the identity
\[Y_E(u_0)=\delta_E\dim \cE_{u_0}.\]
\end{rem}

\textit{Thus, the whole problem of computing the Green functions $Q_w$ 
is reduced to the determination of the signs $\delta_E=\pm 1$ for 
$E \in \Irr(W)^\gamma$} (cf.\ Shoji \cite[1.3, p.~161]{S6a}). 

\begin{rem} \label{rem22} In the tables in Carter \cite[\S 13.3]{C2}, 
Lusztig--Spaltenstein \cite{LuSp1} and Spaltenstein \cite{Spa1}, the
pair $(C,\cE)\in \cN_G$ corresponding to $E\in \Irr(W)$ via the Springer 
correspondence is specified by indicating the class $C$, the group 
$A(u)$ (where $u\in C$) and the irreducible $A(u)$-module $\cE_u$. For 
example, in Table~\ref{tabG23}, we have $\cE\cong \overline{\Q}_\ell$ and 
so $\cE_u$ is the trivial representation of $A(u)$, in all cases. For $G$ 
of exceptional type, the possibilities for $A(u)$ are rather limited: 
either $A(u)$ is abelian of order at most~$6$, or a dihedral group of 
order~$8$, or isomorphic to a symmetric group $\fS_r$ where $r=3,4,5$,
or isomorphic to $\Z/2\Z\times \fS_3$ (see \cite[5.4]{Spa1}).
\end{rem}

\begin{rem} \label{critic} Let $C$ be an $F$-stable unipotent class and 
$u_0\in C^F$ be such that $F$ acts trivially on $A(u_0)$. Let $u_0=u_1,u_2,
\ldots, u_r\in C^F$ be representatives of the $G^F$-conjugacy classes that
are contained in $C^F$, and let $a_1,\ldots,a_r \in A(u_0)$ be corresponding 
representatives of the conjugacy classes of $A(u_0)$ (see Remark~\ref{rem20}). 

(a) Let $E_0 \in \Irr(W)^\gamma$ be such that $\iota_G(E_0)=(C,
\overline{\Q}_\ell)$. Then the corresponding $A(u_0)$-module is the trivial
representation and we have 
\[ Y_{E_0}(u_i)=\delta_{E_0} \qquad \mbox{for $1\leq i \leq r$}.\]
Now let $E_1\in \Irr(W)$ be the trivial representation. Then the restriction
of the almost character $R_{E_1}$ to $C^F$ is constant and so 
$p_{E_0,E_1}\delta_{E_0}=1$ (see Example~\ref{sub26a}). 
Hence, the sign $\delta_{E_0}$ is determined by the Lusztig--Shoji algorithm 
in Remark~\ref{sub26}. 

(b) Let $E \in \Irr(W)^\gamma$ be such that $\iota_G(E_0)=(C,\cE)$ where 
$\cE$ is not the trivial local system. Then we have
$Y_{E}(u_i)=\delta_{E}\mbox{Tr}(a_i,\cE_{u_0})$ for $1\leq i \leq r$.
Hence, we obtain 
\[ \lambda_{E_0,E}=\sum_{g \in G^F} Y_{E_0}(g)Y_E(g)=
\delta_{E_0}\delta_E\sum_{1\leq i\leq r} [G^F:C_G(u_i)^F]
\mbox{Tr}(a_i,\cE_{u_0}).\]
The sum on the right hand can be explicitly computed using the knowledge
of the centraliser orders $|C_G(u_i)^F|$ and the character table of the 
group $A(u_0)$. On the other hand, the left hand side is also known 
from the Lusztig--Shoji algorithm in Remark~\ref{sub26}. Hence, if the
left hand side is non-zero, then we also obtain $\delta_{E_0}\delta_E$; 
since $\delta_{E_0}$ is known from (a), this also determines $\delta_E$.

If the left hand side is zero, then some special arguments are required.
A very particular such case occurs for $G$ of type $E_8$ and $p\neq 2,3$, 
where $A(u_0)\cong \fS_3$ and it turns out that $\delta_{E_0}=1$ and 
$\delta_E \equiv q\bmod 3$; see Beynon--Spaltenstein 
\cite[\S 3, Case~5]{BeSp}. (We will encounter a similar case in
Section~\ref{sec7}.)
\end{rem}

\begin{exmp} \label{expau2} Let $C$ be an $F$-stable unipotent class
such that $A(u)\cong \Z/2\Z$ for $u\in C$. Note that $F$ acts trivially 
on $A(u)$ for any $u\in C^F$ (since $A(u)$ has order~$2$). So let us fix 
some $u_0\in C^F$. Assume that $E_0\in \Irr(W)^\gamma$ is such that 
$\iota_G(E_0)=(C,\overline{\Q}_\ell)$. Then the corresponding sign 
$\delta_{E_0}$ is determined as in Remark~\ref{critic}(a). Let us also 
assume that there exists $E\in \Irr(W)^\gamma$ such that $\iota_G(E)=
(C,\cE)$ where $\cE$ is a non-trivial local system. Now $C^F$ splits into 
two classes in $G^F$; let $u_0'\in C^F$ be such that $u_0,u_0'$ are not
conjugate in $G^F$. Then the values of $Y_{E_0}$, $Y_{E}$ are given by 
\[ Y_{E_0}(u_0)=Y_{E_0}(u_0')=\delta_E \qquad\mbox{and}\qquad 
Y_{E}(u_0)=\delta_{E}, Y_{E}(u_0')=-\delta_{E}.\]
(Note that $\overline{\Q}_\ell$ corresponds to the trivial character of
$A(u_0)$ and $\cE$ corresponds to the non-trivial character of $A(u_0)$.)
Now, we either have $\delta_E=\delta_{E_0}$ or $\delta_E=-\delta_{E_0}$.
But, as already discussed in \cite[p.~591]{BeSp}, if we are in the second 
case, then we change the roles of $u_0,u_0'$ and, with the new choice of 
$u_0$, we will have $\delta_E=\delta_{E_0}$. Thus, in the present situation,
we can always choose $u_0\in C^F$ such that $\delta_E=\delta_{E_0}$, and 
$u_0$ is unique up to conjugation within $G^F$. The only remaining problem 
is to identify $u_0$ in a given list of representatives of unipotent classes.
In order to try to solve this problem, we follow Remark~\ref{critic}(b) and 
consider the relation
\begin{equation*}
\lambda_{E_0,E}=\sum_{g \in G^F} Y_{E_0}(g)Y_{E}(g)=|G^F| \bigl(
|C_G(u_0)^F|^{-1}-|C_G(u_0')^F|^{-1}\bigr)\delta_{E_0}\delta_{E}.\tag{$*$}
\end{equation*}
This leads to the following two cases. 

(a) If $\lambda_{E_0,E}\neq 0$, then ($*$) implies $|C_G(u_0)^F|\neq 
|C_G(u_0')^F|$, which distinguishes the representatives $u_0,u_0'$. 

(b) If $\lambda_{E_0,E}=0$, then an additional argument
is required in order to distinguish the representatives~$u_0,u_0'$. (See 
\S \ref{sec42} below for a typical example.)
\end{exmp}

\begin{table}[htbp] \caption{The Springer correspondence and $p_{E',E}$ 
for $G_2$, $p\neq 3$} \label{tabG2} 
\begin{center} $\begin{array}{c@{\hspace{5pt}}c@{\hspace{5pt}}
c@{\hspace{5pt}}c}\hline E & d_E& A(u) &\iota_G(E) \\ \hline
E_{1,6} & 6 & \{1\}& (1,\overline{\Q}_\ell) \\
E_{1,3}'' & 3& \{1\} &(A_1,\overline{\Q}_\ell) \\
E_{2,2} & 2 & \{1\}& (\tilde{A}_1,\overline{\Q}_\ell) \\
E_{2,1} & 1 & \fS_3 &(G_2(a_1),\overline{\Q}_\ell) \\
E_{1,3}' & 1 & \fS_3&  (G_2(a_1),\cE) \\
E_{1,0} & 0& \Z/(p,2)\Z &(G_2,\overline{\Q}_\ell) \\ \hline
\multicolumn{4}{l}{\text{(where $\cE\not\cong \overline{\Q}_\ell$, 
$\dim \cE_u=2$)}}\end{array}\qquad 
\begin{array}{c@{\hspace{5pt}}c@{\hspace{5pt}}c@{\hspace{5pt}}
c@{\hspace{5pt}}c@{\hspace{5pt}}c@{\hspace{5pt}}c}
\hline p_{E',E} & E_{1,6} & E_{1,3}'' & E_{2,2} &
E_{2,1} & E_{1,3}' & E_{1,0}\\\hline
E_{1,6} & 1 & 1 &  q^2{+}1 & q^4{+}1 & q^2 & 1 \\
E_{1,3}'' & 0 & 1 & 1 & 1 & 0 & 1 \\
E_{2,2} & 0 & 0 & 1 & 1 & 1 & 1 \\
E_{2,1} & 0 & 0 & 0 & 1 & 0 & 1\\
E_{1,3}' & 0 & 0 & 0 & 0 & 1 & 0\\
E_{1,0} & 0 & 0 & 0 & 0 & 0 & 1 \\ \hline &&&&&& \end{array}$
\end{center}
\end{table}

\begin{exmp} \label{expg2} Let $G$ be of type $G_2$ and $p\neq 3$. 
Then $F$ acts trivially on $W$ and the induced automorphism $\gamma\colon 
W\rightarrow W$ is the identity. Consequently, $\Irr(W)=\Irr(W)^\gamma$. 
By \cite[Table~22.2.6]{LiSe}, there are $5$ unipotent classes of~$G$, 
which are all $F$-stable. Let $C$ be the unipotent class denoted by 
$G_2(a)$; we have $A(u)\cong \fS_3$ for $u\in C$. The set $C^F$ splits 
into three classes in $G^F$, with centraliser orders $6q^{4},2q^{4},3q^{4}$.
Thus, up to conjugation by elements in $G^F$, there is a unique $u_0\in 
C^F$ such that $|C_G(u_0)^F|=6q^{4}$ and $F$ acts trivially on $A(u_0)$. 
This whole discussion also works for the Frobenius map $F_p$. Thus, we
can even assume that $F_p(u_0)=u_0$ and $F_p$ acts trivially on $A(u_0)$. 
If $C'$ is a unipotent class different from $C$, then $|A(u')|\leq 2$ for 
$u'\in C$. Consequently, condition ($\clubsuit$) holds for all unipotent 
classes of $G$. The Springer correspondence is explicitly described by 
Spaltenstein \cite[p.~329]{Spa1}; see Table~\ref{tabG2}. As in 
Example~\ref{sub27}, we run the function {\tt ICCTable} which yields the 
coefficients $p_{E',E}$. By inspection of Table~\ref{tabG2}, we see that 
there is just one case which is not covered by the arguments in 
Remark~\ref{critic}. The relevant unipotent class is the above-mentioned 
class $C$; we have 
\[ \iota_G(E_{2,1})=(C,\overline{\Q}_\ell)\qquad \mbox{and} \qquad
\iota_G(E_{1,3}')=(C,\cE) \quad\mbox{where} \quad \dim \cE_u=2.\]
Using the output of {\tt ICCTable} and the argument in 
Remark~\ref{critic}(a), we already see that $\delta_{E_{2,1}}=1$.
However, we have $\langle Y_{E_{2,1}},Y_{E_{1,3}'}\rangle=0$ and so we can
not apply the method in Remark~\ref{critic}(b). We now argue as follows.
Using the output of {\tt ICCTable} (see Table~\ref{tabG2}), we compute
the coefficients $\tilde{p}_{E'}$ in Remark~\ref{rem31}. We obtain the 
following formula:
\[ Q_1(u_0)=(2q+1)Y_{E_{2,1}}(u_0)+qY_{E_{1,3}'}(u_0)=(2q+1)+
2q\delta_{E_{1,3}'}.\]
Thus, depending on whether $\delta_{E_{1,3}'}$ equals $+1$ or $-1$, we 
have $Q_1(u_0)=4q+1$ or $Q_1(u_0)=1$. On the other hand, by 
Remark~\ref{rem31}, $Q_1(u_0)$ also is the value at $u_0$ of the character 
of the permutation representation of $G^F$ on the cosets of $B_0^F$. We 
use this interpretation to show that $Q_1(u_0)>1$. For this purpose, it 
will be sufficient to show that $C_G(u_0)^F \not\subseteq B_0^F$. Assume, 
if possible, that $C_G(u_0)^F \subseteq B_0^F$. Then we also have 
$C_G(u_0)^{F_p}\subseteq B_0^{F_p}$. We have a natural homomorphism 
$B_0^{F_p}\rightarrow T_0^{F_p}$, with kernel consisting of unipotent 
elements only. If $p=2$, then $T_0^{F_p}=\{1\}$ and so $C_G(u_0)^{F_p}$ 
would be a unipotent group, contradiction to the fact that $A(u_0)\cong 
\fS_3$ is a quotient of $C_G(u_0)^{F_p}$. If $p\neq 2$ (and $p\neq 3$),
then $A(u_0)\cong \fS_3$ will still be a quotient of the image of 
$C_G(u_0)^{F_p}$ in $T_0^{F_p}$, contradiction since $T_0$ is abelian.
Thus, we do have $C_G(u_0)^F\not\subseteq B_0^F$ and so $Q_1(u_0)>1$, 
as claimed. But this forces $\delta_{E_{1,3}'}=1$.
\end{exmp}

\begin{rem} \label{basep} Assume that $\tilde{\gamma}=\mbox{id}_G$ and
$F=F_p^m$ where $m\geq 1$. Let us first consider the case where $m=1$
and $G^{F_p}=G(\F_p)$ is an untwisted Chevalley group over the prime 
field~$\F_p$. The whole discussion above applies, of course, with $F_p$ 
instead of $F$. In order to have a separate notation from the general 
case, we introduce a superscript ``$\sharp$'' to various objects considered 
earlier. Thus, for $w\in W$, we denote by $R_w^\sharp$ the virtual
representation of $G^{F_p}$ defined by Deligne--Lusztig, and by 
$Q_w^\sharp$ the corresponding Green function. For $E\in\Irr(W)$, let 
$R_E^\sharp\colon G^{F_p}\rightarrow \overline{\Q}_\ell$ be the corresponding 
almost character. (Note that, now, $F_p$ acts trivially on $W$ and so 
$\sigma_E=\mbox{id}_E\colon E\rightarrow E$.) As in Remark~\ref{rem20}, we 
assume that there exists an element $u_0\in C^{F_p}$ such that $F_p$ acts 
trivially on $A(u_0)$. By Remark~\ref{lu11}, there is a well-defined sign,
which we now denote by $\delta_E^\sharp=\pm 1$, such that 
\[ R_E^\sharp(u_0)=\delta_E^\sharp \,p^{d_E}\dim \cE_{u_0}.\]
Having fixed the above notation, we now consider $F=F_p^m$ for any $m\geq 1$. 
Then we still have $F(u_0)=u_0$, and $F$ acts trivially on $A(u_0)$. Let 
$\delta_E=\pm 1$ be as in Remark~\ref{lu11}, now with respect to~$F$. Then, 
by \cite[Theorem~3.7]{hartur}, we have 
\[ \delta_E=(\delta_E^\sharp)^m;\quad\mbox{in particular, $\delta_E=1$
whenever $m$ is even}.\]
Thus, in order to determine $\delta_E$, it is sufficient to consider
the case where $m=1$. This will be our main tool in the discussion 
of groups of exceptional type, in order to deal with those cases which 
are not covered by Remark~\ref{critic}.
\end{rem} 

\section{Explicit realisations of $G(q)$} \label{sec3}

Assume that $G$ is a simple algebraic group. In order to perform explicit
computations on a computer with elements of $G$ (as in the following 
sections), we need a concrete realisation of $G$ as a matrix group. Now, 
in principle it is well-known how to do this, using Chevalley's 
construction as explained in detail by Carter \cite{C1} and Steinberg 
\cite{St}. Computer programs are available as described by 
Cohen--Murray--Taylor \cite{CMT}, for example. Note that the starting point
of this approach is the choice of a Chevalley basis in the corresponding
simple Lie algebra over~$\C$. For example, Mizuno \cite[Table~12]{Miz2} 
explicitly specifies such a choice for type $E_6,E_7,E_8$. But this raises
the following issue. If we want to perform computations with Mizuno's class 
representatives using the Cohen--Murray--Taylor programs, we would first 
need to clarify the relation between the chosen Chevalley bases~---~and 
the same issue arises with any other reference to the literature about 
explicit computations in~$G$. 

Here, we wish to advertise two recent developments with regard to these 
issues. Firstly, Lusztig \cite{L19} gives an explicit, canonical 
construction of $G$ as a matrix group, which does not 
depend at all on the choice of a Chevalley basis. Since this only yields 
root elements in $G$ for simple roots and their negatives, one still
needs to specify a Chevalley basis for further computations. But then, 
secondly, \cite{mylie} produces two canonical choices of a Chevalley 
basis, which differ from each other by a global sign
and, thus, yield ``canonical'' root elements for all roots. The
computer algebra package {\sf ChevLie} \cite{chevlie} implements these 
constructions and works both in {\sf GAP4} \cite{gap} and Michel's version 
of {\sf GAP3} \cite{jmich}. We briefly explain the constructions of 
\cite{mylie}, \cite{L19} and the basic functionality of the {\sf ChevLie} 
package.

\begin{table}[htbp] \caption{Dynkin diagrams of simple Lie algebras} 
\label{Mdynkintbl} 
\begin{center} 
\makeatletter
\begin{picture}(345,170)
\put( 13, 25){$E_7$}
\put( 50, 25){\circle*{5}}
\put( 48, 30){$1^+$}
\put( 50, 25){\line(1,0){20}}
\put( 70, 25){\circle*{5}}
\put( 68, 30){$3^-$}
\put( 70, 25){\line(1,0){20}}
\put( 90, 25){\circle*{5}}
\put( 88, 30){$4^+$}
\put( 90, 25){\line(0,-1){20}}
\put( 90, 05){\circle*{5}}
\put( 95, 03){$2^-$}
\put( 90, 25){\line(1,0){20}}
\put(110, 25){\circle*{5}}
\put(108, 30){$5^-$}
\put(110, 25){\line(1,0){20}}
\put(130, 25){\circle*{5}}
\put(128, 30){$6^+$}
\put(130, 25){\line(1,0){20}}
\put(150, 25){\circle*{5}}
\put(148, 30){$7^+$}

\put(190, 25){$E_8$}
\put(220, 25){\circle*{5}}
\put(218, 30){$1^+$}
\put(220, 25){\line(1,0){20}}
\put(240, 25){\circle*{5}}
\put(238, 30){$3^-$}
\put(240, 25){\line(1,0){20}}
\put(260, 25){\circle*{5}}
\put(258, 30){$4^+$}
\put(260, 25){\line(0,-1){20}}
\put(260, 05){\circle*{5}}
\put(265, 03){$2^-$}
\put(260, 25){\line(1,0){20}}
\put(280, 25){\circle*{5}}
\put(278, 30){$5^-$}
\put(280, 25){\line(1,0){20}}
\put(300, 25){\circle*{5}}
\put(298, 30){$6^+$}
\put(300, 25){\line(1,0){20}}
\put(320, 25){\circle*{5}}
\put(318, 30){$7^-$}
\put(320, 25){\line(1,0){20}}
\put(340, 25){\circle*{5}}
\put(338, 30){$8^+$}

\put( 13, 59){$G_2$}
\put( 50, 60){\circle*{6}}
\put( 48, 66){$1^+$}
\put( 50, 58){\line(1,0){20}}
\put( 50, 60){\line(1,0){20}}
\put( 50, 62){\line(1,0){20}}
\put( 56, 57){$>$}
\put( 70, 60){\circle*{6}}
\put( 68, 66){$2^-$}

\put(120, 60){$F_4$}
\put(145, 60){\circle*{5}}
\put(143, 65){$1^+$}
\put(145, 60){\line(1,0){20}}
\put(165, 60){\circle*{5}}
\put(163, 65){$2^-$}
\put(165, 58){\line(1,0){20}}
\put(165, 62){\line(1,0){20}}
\put(171, 57.5){$>$}
\put(185, 60){\circle*{5}}
\put(183, 65){$3^+$}
\put(185, 60){\line(1,0){20}}
\put(205, 60){\circle*{5}}
\put(203, 65){$4^-$}

\put(230, 80){$E_6$}
\put(260, 80){\circle*{5}}
\put(258, 85){$1^+$}
\put(260, 80){\line(1,0){20}}
\put(280, 80){\circle*{5}}
\put(278, 85){$3^-$}
\put(280, 80){\line(1,0){20}}
\put(300, 80){\circle*{5}}
\put(298, 85){$4^+$}
\put(300, 80){\line(0,-1){20}}
\put(300, 60){\circle*{5}}
\put(305, 58){$2^-$}
\put(300, 80){\line(1,0){20}}
\put(320, 80){\circle*{5}}
\put(318, 85){$5^-$}
\put(320, 80){\line(1,0){20}}
\put(340, 80){\circle*{5}}
\put(338, 85){$6^+$}

\put( 13,110){$D_n$}
\put( 13,100){$\scriptstyle{n \geq 4}$}
\put( 50,130){\circle*{5}}
\put( 55,130){$1^+$}
\put( 50,130){\line(1,-1){21}}
\put( 50, 90){\circle*{5}}
\put( 56, 85){$2^+$}
\put( 50, 90){\line(1,1){21}}
\put( 70,110){\circle*{5}}
\put( 68,115){$3^-$}
\put( 70,110){\line(1,0){30}}
\put( 90,110){\circle*{5}}
\put( 88,115){$4^+$}
\put(110,110){\circle*{1}}
\put(120,110){\circle*{1}}
\put(130,110){\circle*{1}}
\put(140,110){\line(1,0){10}}
\put(150,110){\circle*{5}}
\put(147,115){$n^\pm$}

\put(210,110){$C_n$}
\put(210,100){$\scriptstyle{n \geq 2}$}
\put(240,110){\circle*{5}}
\put(238,115){$1^+$}
\put(240,108){\line(1,0){20}}
\put(240,112){\line(1,0){20}}
\put(246,107.5){$>$}
\put(260,110){\circle*{5}}
\put(258,115){$2^-$}
\put(260,110){\line(1,0){30}}
\put(280,110){\circle*{5}}
\put(278,115){$3^+$}
\put(300,110){\circle*{1}}
\put(310,110){\circle*{1}}
\put(320,110){\circle*{1}}
\put(330,110){\line(1,0){10}}
\put(340,110){\circle*{5}}
\put(337,115){$n^{\pm}$}

\put( 10,150){$A_n$}
\put( 10,140){$\scriptstyle{n \geq 1}$}
\put( 50,150){\circle*{5}}
\put( 48,155){$1^+$}
\put( 50,150){\line(1,0){20}}
\put( 70,150){\circle*{5}}
\put( 68,155){$2^-$}
\put( 70,150){\line(1,0){30}}
\put( 90,150){\circle*{5}}
\put( 88,155){$3^+$}
\put(110,150){\circle*{1}}
\put(120,150){\circle*{1}}
\put(130,150){\circle*{1}}
\put(140,150){\line(1,0){10}}
\put(150,150){\circle*{5}}
\put(148,155){$n^{\pm}$}

\put(210,150){$B_n$}
\put(210,140){$\scriptstyle{n \geq 2}$}
\put(240,150){\circle*{5}}
\put(238,155){$1^+$}
\put(240,148){\line(1,0){20}}
\put(240,152){\line(1,0){20}}
\put(246,147.5){$<$}
\put(260,150){\circle*{5}}
\put(258,155){$2^-$}
\put(260,150){\line(1,0){30}}
\put(280,150){\circle*{5}}
\put(278,155){$3^+$}
\put(300,150){\circle*{1}}
\put(310,150){\circle*{1}}
\put(320,150){\circle*{1}}
\put(330,150){\line(1,0){10}}
\put(340,150){\circle*{5}}
\put(338,155){$n^\pm$}
\end{picture}
\end{center}
\end{table}

\subsection{Cartan matrices and the $\epsilon$-function} \label{sub41}

Let $I$ be a finite index set and $A=(a_{ij})_{i,j\in I}$ be the Cartan 
matrix of an irreducible (crystallographic) root system. In 
Table~\ref{Mdynkintbl}, we fix a labelling of the corresponding Dynkin 
diagram. Recall that $A$
can be recovered from the diagram as follows. For $i\in I$, we have 
$a_{ii}=2$. Now asssume that $i,j\in I$ are such that $i\neq j$. Then
$a_{ij}=a_{ji}=0$ if $i,j$ are not joined by an edge. We have $a_{ij}=
a_{ji}=1$ if $i,j$ are joined by a simple edge. Furthermore, $a_{ij}=-1$
and $a_{ji}=-2$ if $i,j$ are joined by a double edge with an arrow pointing 
towards~$j$. Finally, $a_{ij}=-1$ and $a_{ji}=-3$ if $i,j$ are joined by 
a triple edge with an arrow pointing towards~$j$. For example:
\[ B_3:\left(\begin{array}{rrr} 2 & -2 & 0 \\ -1 & 2 & -1 \\ 0 & -1 & 2 
\end{array}\right),\qquad
G_2:\left(\begin{array}{rr} 2 & -1 \\ -3 & 2 \end{array}\right).\]
In Table~\ref{Mdynkintbl}, we also specify a function $\epsilon\colon I 
\rightarrow \{\pm 1\}$ such that $\epsilon(i)=-\epsilon(j)$ whenever 
$i\neq j$ and $a_{ij}\neq 0$. Note that, since the diagram is connected, 
there are exactly two such functions: if $\epsilon$ is one of them, then 
the other one is~$-\epsilon$.

\subsection{The Weyl group and the root system} \label{sub42}
Let $V$ be a $\Q$-vector space with a basis $\{\alpha_i \mid i \in I\}$.
For $i \in I$, we define a linear map $s_i\colon V \rightarrow V$ by
$s_i(\alpha_j):=\alpha_j-a_{ij}\alpha_i$ for $j \in I$. Then $s_i^2=
\mbox{id}_V$ and so $s_i \in \text{GL}(V)$. Then the corresponding
Weyl group is given by $W:=\langle s_i \mid i \in I\rangle \subseteq 
\text{GL}(V)$, with root system 
\[ \Phi:=\{w(\alpha_i)\mid i \in I, w \in W\}\subseteq V,\]
where $\{\alpha_i\mid i \in I\}$ is a system of simple roots. In 
{\sf CHEVIE} \cite{chevie}, all of the above is realised by the function
{\tt CoxeterGroup}, which returns a record containing basic data 
corresponding to a given Dynkin diagram. For {\sf ChevLie}, we essentially 
copied the code of that function, so that it works both in {\sf GAP3} 
and {\sf GAP4}. Example:
\begin{verbatim}  
    gap>  W := WeylRecord("B",2);;
    gap>  W.cartan    # the Cartan matrix
    [ [ 2, -2 ], [ -1, 2 ] ]
    gap>  W.roots;    # I-tuples representing the roots
    [ [ 1, 0 ], [ 0, 1 ], [ 1, 1 ], [ 2, 1 ], 
      [ -1, 0 ], [ 0, -1 ], [ -1, -1 ], [ -2, -1 ] ]
    gap>  W.epsilon;
    [ 1, -1 ]
\end{verbatim}
The record component {\tt epsilon} holds the function $\epsilon\colon 
I \rightarrow \{\pm 1\}$. (This is not present in the original {\sf CHEVIE}
system.)

\subsection{The operators $e_i$ and $f_i$} \label{sub43}
For any $\alpha,\beta\in\Phi$ such that $\alpha\neq \pm \beta$, we define
\[p_{\alpha,\beta}:=\max\{i \geq 0 \mid \beta+i\alpha \in \Phi\}\quad
\mbox{and}\quad q_{\alpha,\beta}:=\max\{i \geq 0 \mid \beta-i\alpha \in 
\Phi\}.\]
Thus, $\beta-q_{\alpha,\beta}\alpha,\ldots,\beta-\alpha,\beta,
\beta+\alpha,\ldots, \beta+p_{\alpha,\beta}\alpha$ is the $\alpha$-string
through~$\beta$. Following Lusztig \cite[\S 2]{L19}, we now consider a 
$\Q$-vector space $M$ with a basis $\{u_i \mid i\in I\} \cup
\{v_\alpha\mid \alpha \in \Phi\}$ and define linear maps $e_i\colon 
M\rightarrow M$ and $f_i\colon M\rightarrow M$ by the following formulae, 
where $j\in I$ and $\alpha\in\Phi$.
\begin{alignat*}{2}
e_i(u_j) &:= |a_{ji}|v_{\alpha_i},& \qquad e_i(v_\alpha) 
&:= \left\{\begin{array}{cl} (q_{\alpha_i,\alpha}+1)v_{\alpha+\alpha_i}  
& \mbox{ if $\alpha+\alpha_i \in\Phi$},\\ u_i & \mbox{ if $\alpha=
-\alpha_i$},\\ 0  & \mbox{ otherwise}, \end{array}\right.\\ f_i(u_j) &:= 
|a_{ji}| v_{-\alpha_i},& \qquad f_i(v_\alpha)& :=
\left\{\begin{array}{cl} (p_{\alpha_i,\alpha}+1)v_{\alpha-\alpha_i}  & 
\mbox{ if $\alpha-\alpha_i \in\Phi$},\\ u_i & \mbox{ if $\alpha=
\alpha_i$},\\ 0 & \mbox{ otherwise}. \end{array}\right.
\end{alignat*}
Note that all entries of the matrices of $e_i$, $f_i$ with respect to the 
given basis of $M$ are non-negative integers. We consider $\mbox{End}(M)$ 
as a Lie algebra with the usual Lie bracket $[x,y]:=x \circ y-y\circ x$ 
for $x,y\in\mbox{End}(M)$.  We set $h_i:=[e_i,f_i]$ for $i\in I$. As in 
\cite[\S 4]{mylie}, consider the Lie subalgebra $\fg \subseteq 
\mbox{End}(M)$ generated by $e_i,f_i$ ($i \in I$). Then $\fg$ is a 
(split) simple Lie algebra with Cartan subalgebra $\fh:=\langle 
h_i \mid i \in I\rangle_\Q$ and corresponding root system $\Phi$. In 
particular, we have the Cartan decomposition
\[\fg=\fh\oplus\textstyle{\bigoplus_{\alpha\in \Phi}} \fg_\alpha
\qquad\mbox{where} \qquad \dim \fg_\alpha=1 \mbox{ for all $\alpha\in\Phi$}.\]
In {\sf ChevLie}, we obtain matrices representing $e_i,f_i$ through the 
following command.
\begin{verbatim}
    gap>  W := WeylRecord("E",7);                    
    gap>  r := LieAdjRepresentation(W);;
    gap>  #I dim = 133, Chevalley relations ....... true
\end{verbatim} 
If the basis vectors $\{u_i \mid i\in I\}\cup \{v_\alpha\mid \alpha 
\in \Phi\}$ are ordered as in \cite[Lemma~4.1]{mylie}, then each $e_i$
is a nilpotent upper triangular matrix and each $f_i$ is a nilpotent 
lower triangular matrix. This convention is used in {\tt ChevLie}.
There is also the function {\tt LieMinusculeRepresentation} which 
constructs the matrices in a representation with a minuscule heighest 
weight, as in \cite{mylie1}. 
\begin{verbatim}
    gap>  W := WeylRecord("E",7);                    
    gap>  m := MinusculeWeights(W);
    [ [ 0, 0, 0, 0, 0, 0, 1 ] ]
    gap>  r := LieMinusculeRepresentation(W,m[1]);;
    #I dim = 56, Chevalley relations true
\end{verbatim}

\subsection{Lusztig's construction of Chevalley groups} \label{sub44}

Following Lusztig \cite[\S 2]{L19}, we now obtain a Chevalley group over any
field as follows. Since the $e_i$ and $f_i$ are nilpotent, we can define 
$x_i(t):=\exp(t e_i) \in \mbox{GL}(M)$ and $y_i(t):=\exp(t f_i)\in 
\mbox{GL}(M)$ for all $i\in I$ and $t\in\Q$. Explicitly, we have:
\begin{gather*}
x_i(t)(u_j) = u_j+|a_{ji}|tv_{\alpha_i},\qquad
x_i(t)(v_{-\alpha_i}) =v_{-\alpha_i}+tu_i+t^2v_{\alpha_i},\\
x_i(t)(v_{\alpha_i}) =v_{\alpha_i}, \qquad x_i(t)(v_{\alpha}) =
\sum_{k\geq 0,\, \alpha+k\alpha_i\in\Phi} 
\textstyle{\binom{k+q_{\alpha_i,\alpha}}{k}} t^kv_{\alpha+ k\alpha_i},\\
y_i(t)(u_j) = u_j+|a_{ji}|tv_{-\alpha_i},\qquad
y_i(t)(v_{\alpha_i}) =v_{\alpha_i}+tu_i+t^2v_{-\alpha_i},\\
y_i(t)(v_{-\alpha_i}) =v_{-\alpha_i}, \qquad y_i(t)(v_{\alpha}) =
\sum_{k\geq 0,\,\alpha-k\alpha_i\in\Phi} 
\textstyle{\binom{k+p_{\alpha_i,\alpha}}{k}}
t^kv_{\alpha-k\alpha_i},
\end{gather*}
where $j\in I$ and $\alpha\in \Phi$, $\alpha\neq \pm \alpha_i$. (Compare
with the formulae in \cite[\S 4.3]{C1}.) Now let $K$ be any field
and $\bar{M}$ be a $K$-vector space with a basis $\{\bar{u}_i 
\mid i\in I\} \cup \{\bar{v}_\alpha \mid \alpha\in \Phi\}$. For
$i\in I$ and $t\in K$, we define $\bar{x}_i(t)\in \mbox{GL}(\bar{M})$ 
and $\bar{y}_i(t)\in \mbox{GL}(\bar{M})$ by formulae as above (which 
involve only integer coefficients; see also \cite[\S 4.4]{C1}.) Then
\[G_K:=\langle \bar{x}_i(t), \bar{y}_i(t)\mid i \in I, t\in K \rangle
\subseteq \mbox{GL}(\bar{M})\]
is a Chevalley group over $K$. 

\subsection{The $\epsilon$-canonical Chevalley basis} \label{sub45}
For each $\alpha\in\Phi$, let us choose a non-zero element $e_\alpha
\in\fg_\alpha$. If $\alpha,\beta\in\Phi$ are such that $\alpha+\beta
\in\Phi$, then we define $N_{\alpha,\beta}\in\Q$ by $[e_\alpha,e_\beta]
=N_{\alpha,\beta}e_{\alpha+\beta}$. Now $\{e_\alpha\mid \alpha\in\Phi\}$
is called a \textit{Chevalley basis} if 
\[ N_{\alpha,\beta}=\pm (q_{\alpha,\beta}+1)\quad \mbox{for all 
$\alpha,\beta\in\Phi$ such that $\alpha+\beta \in \Phi$}.\]
Clearly, if $\{e_\alpha\mid \alpha\in\Phi\}$ is a Chevalley basis, then 
so is $\{\pm e_\alpha\mid \alpha\in\Phi\}$, for any choice of the signs. 
Now, having fixed $\epsilon\colon I\rightarrow \{\pm 1\}$, there is a 
unique Chevalley basis $\{\be_\alpha^\epsilon\mid \alpha\in\Phi\}$
such that the following relations hold, for any $i\in I$:
\begin{alignat*}{2} 
\be_{\alpha_i}^\epsilon &= \epsilon(i)e_i, &\quad 
\be_{-\alpha_i}^\epsilon&= -\epsilon(i)f_i,\\
[e_i,\be_\alpha^\epsilon]&=(q_{\alpha_i,\alpha}+1)\be_{\alpha+
\alpha_i}^\epsilon &\qquad &\mbox{if $\alpha+\alpha_i\in \Phi$},\\
[f_i,\be_\alpha^\epsilon]&=(p_{\alpha_i,\alpha}+1)\be_{\alpha-
\alpha_i}^\epsilon &\qquad &\mbox{if $\alpha-\alpha_i\in \Phi$}.
\end{alignat*}
(See \cite[Theorem~5.7 and Example~5.9]{mylie}.)
If we replace $\epsilon$ by $-\epsilon$, then $\be_\alpha^{-\epsilon}=
-\be_\alpha^\epsilon$ for all $\alpha\in\Phi$. In {\sf ChevLie}, the
complete list of elements $\{\be_\alpha^\epsilon\mid \alpha\in\Phi\}$,
as matrices with respect to the basis $\{u_i \mid i\in I\}\cup 
\{v_\alpha\mid \alpha \in \Phi\}$ of $M$, is obtained through the 
command {\tt CanonicalChevalleyBasis(W)}.

\subsection{Root elements} \label{sub46}
Let $\alpha\in\Phi$. By \cite[Cor.~5.6]{mylie}, the linear map 
$\be_\alpha^\epsilon\in\mbox{End}(M)$ is nilpotent and so 
we can define $x_\alpha^\epsilon(t):=\exp(t\be_\alpha^\epsilon)
\in\mbox{GL}(M)$ for any $t\in \Q$. As in \S \ref{sub44}, if $K$ is any 
field, then we obtain analogous elements $\bar{x}_\alpha^\epsilon(t)\in 
G_K$ for $t\in K$. If $\epsilon$ is replaced by $-\epsilon$, then 
$\bar{x}_\alpha^{-\epsilon}(t)= \bar{x}_\alpha^\epsilon(-t)$ for all 
$t\in K$. Thus, having fixed $\epsilon$, we obtain ``canonical'' root 
elements $\bar{x}_\alpha^\epsilon(t) \in G_K$. In {\sf ChevLie}, these 
are obtained as follows.
\begin{verbatim}
    gap>  W := WeylRecord("E",8);                    
    gap>  rep := LieAdjointRepresentation(W);
    gap>  cb := CanonicalChevalleyBasisRep(W,rep); 
    gap>  r := W.roots[70];
    [ 1, 1, 2, 3, 2, 1, 1, 0 ]
    gap>  u := ChevalleyRootElement(W,cb,r,5);   # t=5 
    < matrix 248x248 over the integers >
\end{verbatim} 
(One can equally well use elements from finite fields, of course; 
furthermore, instead of the adjoint representation, one can also use a 
representation with a minuscule highest weight, if such a representation
exists.) Once the elements $\bar{x}_\alpha^\epsilon(t)\in G_K$ are available,
we can also define elements $\bar{h}_\alpha^\epsilon(t)\in G_K$ and 
$\bar{n}_\alpha^\epsilon(t)\in G_K$ by analogous formulae as in 
\cite[Lemma~6.4.4]{C1}. These elements yield the familiar diagonal
elements and lifts of reflections in the Weyl group, respectively. 

In {\sf ChevLie}, the basis of $M$ is ordered such that all 
$\bar{x}_\alpha^\epsilon(t)$, $\alpha\in \Phi^+$, are represented by 
unipotent upper triangular matrices, and all $\bar{x}_\alpha^\epsilon(t)$, 
$\alpha\in \Phi^-$, by unipotent lower triangular matrices; futhermore, all 
$\bar{h}_\alpha^\epsilon(t)$ are diagonal and all 
$\bar{n}_\alpha^\epsilon(t)$ are monomial matrices. Thus, the unipotent 
radical of the standard Borel subgroup of $G_K$, that is, the subgroup
\[ U_K=\langle \bar{x}_\alpha^\epsilon(t) \mid \alpha\in\Phi^+, t\in K
\rangle\subseteq G_K\]
consists precisely of the unipotent upper triangular matrices in $G_K$. 

\subsection{Computing estimates for $Q_1(u)$} \label{sub47}  
Now let $G=G_k$, where $k=\overline{\F}_p$ as in the previous sections,
and $F_p\colon G\rightarrow G$ is given by $F_p(\bar{x}_\alpha^\epsilon(t)) 
=\bar{x}_\alpha^\epsilon(t^p)$ for $\alpha\in \Phi$ and $t\in k$. Once the 
above functions are available, it is straightforward to write a program 
which computes the cardinalities of the sets $Q_{1,w}(u)$ in 
Lemma~\ref{rem31aa}, where $u\in G^{F_p}$. For $w\in W$, let $\Phi_w^+=
\{\beta_1, \ldots,\beta_l\}$ where $l=l(w)$. Then 
\[U_w^{F_p}=\{\bar{x}_{\beta_1}^\epsilon(t_1)\cdots 
\bar{x}_{\beta_l}^\epsilon(t_l)\mid t_i\in \F_p\},\]
with uniqueness of expression. Thus, by running systematically over all 
tuples $(t_1,\ldots,t_l) \in \F_p^l$, we have a way of running through 
the elements of $U_w^{F_p}$, one by one. For each $v\in U_w^{F_p}$, we need 
to check if $v':=\dot{w}^{-1}v^{-1}uv\dot{w} \in B_0^{F_p}$ which, by the 
remarks in \S \ref{sub46}, is simply done by testing if $v'$ is an upper 
triangular matrix. (Some modifications are needed for twisted groups; see 
\S \ref{sec50} below.)

This description shows that computer memory is not an issue, but speed is
critical. We shall have to perform several millions of multiplications of 
matrices (of moderate size) over small finite fields. For this purpose, 
the {\sf GAP} \cite{gap} function {\tt ImmutableMatrix} turns out to be 
particularly efficient. It converts a given matrix into an internal format 
which appears to be highly optimized concerning space and runtime. 

\section{On the Green functions of type $F_4$ in characteristic~$3$}
\label{sec4}

Throughout this section, let $G$ be a simple algebraic group of type 
$F_4$. We have $G=\langle x_\alpha(t)\mid \alpha\in\Phi,t\in k\rangle$
where $\Phi$ is the root system of $G$ with respect to~$T_0$. Let 
$\{\alpha_1,\alpha_2, \alpha_3,\alpha_4\}$ be the set of simple roots 
with respect to~$B$, where the labelling is as in Table~\ref{Mdynkintbl}. 
We assume that $G$ is defined and split over $\F_p$, with corresponding
Frobenius map $F_p\colon G \rightarrow G$ such that $F_p(t)=t^p$ for all 
$t\in T_0$. Let $F=F_p^m$ where $m\geq 1$.  Then
\[ G^F=F_4(q) \qquad \mbox{where} \qquad q=p^m.\]
For $p>3$, the Green functions of $G^F$ have been determined by
Shoji \cite{Sh82}. For $p=2$, the Green functions are explicitly computed
by Malle \cite{Mal1}. It is briefly remarked by Marcelo--Shinoda \cite{MaSh} 
that Shoji's computations remain valid for $p=3$. Since further details
are omitted in \cite{MaSh}, we provide here an independent verification
based on the results in Section~\ref{sec2}; this will also serve as a 
model for the later case studies in Sections~\ref{sec5}--\ref{sec7}. In the 
following, if $\alpha=\sum_{i=1}^4 n_i\alpha_i\in \Phi$, we just write 
$x_{n_1n_2n_3 n_4}(t)$ instead of $x_\alpha(t)$. 

\subsection{Critical unipotent classes for $p=3$} \label{sec41}

Assume from now on that $p=3$. We have $|\Irr(W)|=25$ and the character 
table of $W$ is available in {\sf CHEVIE}. Now $F$ acts trivially on $W$ 
and $\gamma\colon W\rightarrow W$ is the identity. 
Consequently, $\Irr(W)=\Irr(W)^\gamma$. By Shoji \cite{Sh74}, there are $16$
unipotent classes of~$G$, which are all $F_3$-stable. Furthermore, for each 
unipotent class $C$, there exists an element $u_0\in C$ such that $F_3(u_0)
=u_0$ and $F_3$ acts trivially on $A(u_0)$; see \cite[Table~6]{Sh74}. Thus, 
condition ($\clubsuit$) in Section~\ref{sec2} holds. The Springer 
correspondence is explicitly described by Spaltenstein \cite[p.~330]{Spa1}.
As in Example~\ref{sub27}, we run the function {\tt ICCTable} which yields 
the coefficients $p_{E',E}$. By inspection of the output, we see that 
$p_{E',E_1}\in \{0,1\}$ for all $E'\in \Irr(W)$, where $E_1$ is the 
trivial representation of~$W$. Hence, by the argument in 
Remark~\ref{critic}(a), we already have that 
\[ \delta_{E_0}=1\qquad \mbox{for all $E_0\in \Irr(W)$ such that 
$\iota_G(E_0)=(C,\overline{\Q}_\ell)$}. \]
There are further cases which are not covered by the arguments in 
Remark~\ref{critic}(b); these are specified in Table~\ref{tabF4}.
The last two columns specify $E,E_0\in\Irr(W)$ such that $\iota_G(E_0)=
(C,\overline{\Q}_\ell)$ and $\iota_G(E)=(C,\cE)$ with $\cE\not\cong 
\overline{\Q}_\ell$. 

\begin{table}[htbp] \caption{Critical unipotent classes for type $F_4$ with
$p=3$} \label{tabF4} 
\begin{center}
$\begin{array}{c@{\hspace{3pt}}c@{\hspace{5pt}}c@{\hspace{5pt}}
c@{\hspace{5pt}}c@{\hspace{8pt}}c@{\hspace{8pt}}} \hline C & \dim C_G(u) 
& A(u) & |C_G(u)^F| & E_0 & E:\dim \cE_u \\ \hline
F_4(a_1)   &  6 & \Z/2\Z   &  2q^6,2q^6    & \chi_{4,1} &\chi_{2,3}:1\\ \hline
F_4(a_2)   &  8 & \Z/2\Z   &  2q^8,2q^8      & \chi_{9,1} & \chi_{2,1}:1\\ 
\hline
F_4(a_3)   & 12 & \fS_4    &  24q^{12}, 8q^{12},4q^{12},4q^{12},3q^{12} 
& \chi_{12} & \begin{array}{l} \chi_{9,3}: 3\\\chi_{6,2}: 2\\
\chi_{1,3}: 3\end{array}\\  \hline
C_3(a_1)   & 14 & \Z/2\Z   & 2q^{12}(q^2{-}1),2q^{12}(q^2{-}1) & \chi_{16} 
&\chi_{4,3}:1\\ \hline \end{array}$
\end{center}
\end{table}
 
In the table, we use the notation of Spaltenstein \cite{Spa1} for 
$\Irr(W)$. The translation to the notation of Carter \cite[\S 13.2]{C2}
(or {\sf CHEVIE}) is as follows.
\[ \begin{array}{ccccccccc} \hline \chi_{4,1} & \chi_{2,3} & \chi_{9,1} & 
\chi_{2,1} & \chi_{12} & \chi_{9,3} & \chi_{6,2} & \chi_{1,3} & 
\chi_{4,3} \\ \hline \phi_{4,8} & \phi_{2,4}'& \phi_{9,2} & \phi_{2,4}'' & 
\phi_{12,4}& \phi_{9,6}' & \phi_{6,6}'' & \phi_{1,12}' & \phi_{4,7}'' \\ 
\hline\end{array}\]

\subsection{The class $F_4(a_1)$} \label{sec42} Let $C$ be the unipotent 
class denoted by $F_4(a_1)$. Since $A(u)\cong \Z/2\Z$ for $u\in C$, we are 
in the situation of Example~\ref{expau2}, with $E_0=\chi_{4,1}$ and 
$E=\chi_{2,3}$; see Table~\ref{tabF4}. We already know that there exists 
some $u_0\in C^F$ such that $F$ acts trivially on $A(u_0)$ and 
$\delta_{\chi_{4,1}}=\delta_{\chi_{2,3}}=1$. The only remaining problem is to 
identify $u_0$ in a given list of class representatives. For a certain choice
of a Chevalley basis in the Lie algebra of~$G$, a representative $\tilde{u}
\in C$ is explicitly described by Lawther \cite[Table~A]{Law}. Since our
``canonical'' Chevalley basis in Section~\ref{sec3} may be different from
that in \cite{Law}, we can only say that 
\[ \tilde{u}:=x_{1000}(\varepsilon_1)x_{0100}(\varepsilon_2)x_{0110}
(\varepsilon_3) x_{0011}(\varepsilon_4) \in C\]
where $\varepsilon_i=\pm 1$ for $1\leq i \leq 4$. Clearly, we have $\tilde{u}
\in G^{F_3}$. Using our computer programs in \S \ref{sec3}, we check 
explicitly that all elements $\tilde{u}$ as above, for all possible choices 
of the $\varepsilon_i$, are conjugate under elements of $T_0^{F_3}$. Hence, we 
may assume without loss of generality that $\varepsilon_i=1$ for all~$i$. Now 
consider the signs $\delta_{\chi_{4,1}}$ and $\delta_{\chi_{2,3}}$ with 
respect to~$\tilde{u}$. Since $\iota_G(\chi_{4,1})=(C,\overline{\Q}_\ell)$, 
we already know that $\delta_{\chi_{4,1}}=1$. We claim that we also have
$\delta_{\chi_{2,3}}=1$. Since $\tilde{u}\in C^{F_3}$, we can apply 
Remark~\ref{basep}. Thus, it will be sufficient to determine 
$\delta_{\chi_{2,3}}$ in the special case where $m=1$. We now argue as in 
Example~\ref{expg2}. Using the output of {\tt ICCTable}, we find the 
coefficients $\tilde{p}_{E'}$ in \S \ref{rem31}. This yields the formula
\[ Q_1(\tilde{u})=(4q+1)Y_{\chi_{4,1}}(\tilde{u})+2qY_{\chi_{2,3}}(\tilde{u})
=(4q+1) +2q\delta_{\chi_{2,3}}.\]
Setting $q=3$, we obtain $Q_1(\tilde{u})=13+6 \delta_{\chi_{2,3}} \in 
\{19,7\}$. On the other hand, by Remark~\ref{rem31f}, we can try 
to directly compute the value of $Q_1(\tilde{u})$ (or, at least, a lower
bound for that value), by running through the sets $Q_{1,w}(\tilde{u})$ 
and checking if the corresponding coset representatives are 
fixed by~$\tilde{u}$. It turns out that we just need to go up to 
$l(w)\leq 3$ in order to find $19$ cosets that are fixed. (Since we already 
know that $Q_1(\tilde{u})\in\{19,7\}$, it would actually be enough to find 
strictly more than~$7$ cosets that are fixed~---~this simple remark will
be important in later sections when the values of $Q_1$ get significantly
larger.) Thus, we do have $Q_1(\tilde{u})=19$ and, indeed, $\tilde{u}\in C$ 
is a representative such that $\delta_{\chi_{4,1}}=\delta_{\chi_{2,3}}=1$ 
(regardless of the choice of a Chevalley basis).

\subsection{The class $F_4(a_2)$} \label{sec43} Let $C$ be the unipotent 
class denoted by $F_4(a_2)$. Again, we are in the situation of 
Example~\ref{expau2}, now with $E_0=\chi_{9,1}$ and $E=\chi_{2,1}$; see 
Table~\ref{tabF4}. As in the previous case, there exists some $u_0\in C^F$ 
such that $\delta_{\chi_{9,1}}=\delta_{\chi_{2,1}}=1$. The only remaining 
problem is to identify $u_0$ in a given list of class representatives.
By Lawther \cite[Table~A]{Law}, there is a choice of signs $\varepsilon_i=
\pm 1$ such that 
\[ \tilde{u}:=x_{1100}(\varepsilon_1)x_{0120}(\varepsilon_2)x_{0001}
(\varepsilon_3) x_{0011}(\varepsilon_4) \in C. \]
We check again that all elements as above, for all possible choices of 
the signs, are conjugate under elements of $T_0^{F_3}$. Hence, as in the
previous case, we may assume without loss of generality that 
$\varepsilon_i=1$ for all~$i$. We consider the signs
$\delta_{\chi_{9,1}}$ and $\delta_{\chi_{2,1}}$ with respect to~$\tilde{u}$.
Since $\iota_G(\chi_{9,1})=(C,\overline{\Q}_\ell)$, we already know 
that $\delta_{\chi_{9,1}}=1$. Since $\tilde{u}\in C^{F_3}$, it will again be 
sufficient to determine $\delta_{\chi_{2,1}}$ in the special case where 
$m=1$. Using the output of {\tt ICCTable}, we find the formula 
\[ Q_1(\tilde{u})=(9q^2+4q+1)Y_{\chi_{9,1}}(\tilde{u})+2q^2Y_{\chi_{2,1}}
(\tilde{u})=(9q^2+4q+1)+2q^2\delta_{\chi_{2,1}}.\]
Setting $q=3$, we obtain $Q_1(\tilde{u})=94+18 \delta_{\chi_{2,1}} \in 
\{112, 76\}$. Again, by an explicit computation counting coset 
representatives, we find that $Q_1(\tilde{u})=112$. (We just need to look 
at sets $Q_{1,w}(\tilde{u})$ where $l(w)\leq 7$.) Thus, indeed, 
$\tilde{u}\in C$ is a representative such that $\delta_{\chi_{9,1}}=
\delta_{\chi_{2,1}}=1$ (regardless of the choice of a Chevalley basis).

\subsection{The class $F_4(a_3)$} \label{sec44} Let $C$ be the unipotent 
class denoted by $F_4(a_3)$. We have $A(u) \cong \fS_4$ for $u\in C$. By 
Shoji \cite[Table~6]{Sh74}, the set $C^F$ splits into five classes in 
$G^F$, with centraliser orders $24q^{12},4q^{12},8q^{12}, 3q^{12},4q^{12}$. 
Thus, up to conjugation by elements in $G^F$, there is a unique $u_0\in C^F$
such that $|C_G(u_0)^F|=24q^{12}$ and $F$ acts trivially on $A(u_0)$. Now, 
via the Springer correspondence, there are four irreducible representations 
of $W$ associated with $C$. These are $\chi_{12}$, $\chi_{9,3}$, 
$\chi_{6,2}$, $\chi_{1,3}$; see Table~\ref{tabF4}. We already know that 
$\delta_{\chi_{12}}=1$. Now $u_0$ can be chosen to be fixed by~$F_3$; see 
the explicit expression in \cite[Table~6]{Sh74}. So, by 
Remark~\ref{basep}, it is sufficient to determine $\delta_{\chi_{6,2}}$,
$\delta_{\chi_{9,3}}$, $\delta_{\chi_{2,1}}$ in the special case where $m=1$. 

Let $u_0=u_1,u_2,u_3,u_4,u_5\in C^{F_3}$ be representatives of the 
$G^{F_3}$-conjugacy classes that are contained in $C^{F_3}$, and let $a_1,
a_2,a_3,a_4,a_5 \in A(u_0)$ be corresponding representatives of the 
conjugacy classes of $A(u_0)$ (see Remark~\ref{rem20}). 
Using the output of {\tt ICCTable}, and setting $q=3$, we find the formula
\begin{align*}
Q_1(u_i)&= (12q^4+16q^3+9q^2+4q+1)Y_{\chi_{12}}(u_i)\\ &\qquad + (6q^4+4q^3)
Y_{\chi_{6,2}}(u_i)+(9q^4+8q^3+2q^2)Y_{\chi_{9,3}}(u_i)+ 
q^4Y_{\chi_{1,3}}(u_i)\\
&=1498Y_{\chi_{12}}(u_i)+594 Y_{\chi_{6,2}}(u_i)+963
Y_{\chi_{9,3}}(u_i) +81Y_{\chi_{1,3}}(u_i) \qquad\mbox{($q=3$)},
\end{align*}
for $1\leq i \leq 5$. Now, up to the signs $\delta_{\chi_{6,2}}$, 
$\delta_{\chi_{9,3}}$, $\delta_{\chi_{1,3}}$, the values of the 
$Y$-functions on $u_i$ are given by character values of $\fS_4$; see
Remark~\ref{lu11}. Thus, up to those signs, we can explicitly determine 
the values $Q_1(u_i)$. We find that $Q_1(u_i)\leq 5818$ for all~$i$, 
regardless of what the signs $\delta_{\chi_{6,2}}$, $\delta_{\chi_{9,3}}$, 
$\delta_{\chi_{1,3}}$ are; furthermore,
\[ Q_1(u_i)=5818 \qquad \Longleftrightarrow \qquad i=1 \quad\mbox{and}\quad 
\delta_{\chi_{6,2}}= \delta_{\chi_{9,3}}=\delta_{\chi_{1,3}}=1.\]
Hence, if we can find an element $\tilde{u}\in  C^{F_3}$ such that 
$Q_1(\tilde{u})=5818$, then $\tilde{u}$ must be conjugate to $u_0=u_1$ in 
$G^{F_3}$ and $\delta_{\chi_{6,2}}=\delta_{\chi_{9,3}}=\delta_{\chi_{1,3}}=1$.
Now, using the list of representatives in \cite[Table~6]{Sh74} and
adjusting some signs, we consider the element 
\[ \tilde{u}:=x_{1100}(1)x_{0120}(-1)x_{0122}(1)x_{1122}(-1)\in G^{F_3},\] 
where we work with the ``canonical'' Chevalley basis as in Section~\ref{sec3}.
We check that, in the adjoint representation, $\tilde{u}$ has Jordan blocks of 
sizes $7,6^2,5^3, 3^6$. Hence, we have $\tilde{u}\in C$; see 
\cite[Table~4]{Law}. By an explicit computation counting coset 
representatives, we find that $Q_1(\tilde{u})=5818$. (We just need to look 
at sets $Q_{1,w}(\tilde{u})$ where $l(w) \leq 9$.) Thus, indeed, 
$u_0,\tilde{u}$ are conjugate in $G^{F_3}$ and we do have 
$\delta_{\chi_{6,2}}= \delta_{\chi_{9,3}}=\delta_{\chi_{1,3}}=1$.

\subsection{The class $C_3(a_1)$} \label{sec45} Let $C$ be the unipotent 
class denoted by $C_3(a_1)$. Again, we are in the situation
of Example~\ref{expau2}, now with $E_0=\chi_{16}$ and $E=\chi_{4,3}$; see 
Table~\ref{tabF4}. As in the first case, there exists some $u_0\in C^F$ 
such that $\delta_{\chi_{16}}=\delta_{\chi_{4,3}}=1$. The only remaining 
problem is to identify $u_0$ in a given list of class representatives.
By Lawther \cite[Table~A]{Law}, there is a choice of signs $\varepsilon_i=
\pm 1$ such that 
\[x_{0100}(\varepsilon_1)x_{0001}(\varepsilon_2)x_{0120}(\varepsilon_3)\in C
\qquad \mbox{(where $\varepsilon_i=\pm 1$)}.\]
Now we find that all elements as above, for all possible choices of 
the signs, are conjugate under elements of $T_0^{F_3}$ to one of the 
following two elements:
\[\tilde{u}^{\pm}:=x_{0100}(1)x_{0001}(1)x_{0120}(\pm 1).\]
We check that, in the adjoint representation, both $\tilde{u}^{+}$ and 
$\tilde{u}^-$ have Jordan blocks of sizes $7,6^2,5,4^4,3^3,1^3$. Hence, we 
have $\tilde{u}^\pm \in C$; see \cite[Table~4]{Law}. We consider the signs 
$\delta_{\chi_{16}}^\pm$ and $\delta_{\chi_{4,3}}^\pm$ with respect 
to~$\tilde{u}^\pm$. Since $\iota_G(\chi_{16})=(C,\overline{\Q}_\ell)$, we 
already know that $\delta_{\chi_{16}}^\pm=1$. Since $\tilde{u}^{\pm}\in 
C^{F_3}$, it will again be sufficient to determine $\delta_{\chi_{4,3}}^\pm$ 
in the special case where $m=1$. Using the output of {\tt ICCTable}, we 
find the formula 
\begin{align*}
Q_1(\tilde{u}^{\pm})&= (16q^5+36q^4+28q^3+11q^2+4q+1) Y_{\chi_{16}}
(\tilde{u}^{\pm})\\ &\qquad  +(4q^5+10q^4+8q^3+2q^2)Y_{\chi_{4,3}}
(\tilde{u}^{\pm}).
\end{align*}
Setting $q=3$, we obtain $Q_1(\tilde{u}^\pm)=7672+2016\,
\delta_{\chi_{4,3}}^\pm \in\{9688,5656\}$. By an
explicit computation counting coset representatives, we find that
$Q_1(\tilde{u}^\pm)=9688$. (For $\tilde{u}^+$, we just need to look at sets
$Q_{1,w}(\tilde{u}^\pm)$ where $l(w)\leq 13$ in order to find $9688$ 
cosets that are fixed; for $\tilde{u}^-$ we have to go up to $l(w)
\leq 14$ in order to find strictly more than $5656$ cosets that are fixed.)
In particular, this shows that $\tilde{u}^+,\tilde{u}^-$ are conjugate in 
$G^{F_3}$. Hence, indeed, $u=\tilde{u}^+\in C$ is a representative such that 
$\delta_{\chi_{16}}=\delta_{\chi_{4,3}}=1$ (independently of the choice
of a Chevalley basis).

\addtocounter{thm}{5}
\begin{rem} By analogous arguments, we obtain an independent verification 
of the results of Malle \cite{Mal1} on the Green functions of $F_4(2^m)$.
\end{rem}

\section{On the Green functions of untwisted $E_6$ in 
characteristic~$3$} \label{sec5}

Throughout this section, let $G$ be a simple algebraic group of 
(adjoint) type $E_6$. We have $G=\langle x_\alpha(t)\mid \alpha\in\Phi,t\in 
k\rangle$ where $\Phi$ is the root system of $G$ with respect to~$T_0$. Let 
$\{\alpha_i\mid 1\leq i\leq 6\}$ be the set of simple roots with respect
to~$B_0$, where the labelling is chosen as in Table~\ref{Mdynkintbl}. We 
assume that $G$ is defined and split over $\F_p$, with corresponding 
Frobenius map $F_p\colon G \rightarrow G$ such that $F_p(t)=t^p$ for all 
$t\in T_0$. Let $F=F_p^m$ where $m\geq 1$.  Then
\[ G^F=E_6(q) \qquad \mbox{where} \qquad q=p^m.\]
For $p>3$, the Green functions have been determined by Beynon--Spaltenstein 
\cite{BeSp}. For $p=2,3$, the Green functions are explicitly computed by 
Malle \cite{Mal1} and Porsch \cite{Por}. Since the results for $p=3$ 
have never been published, and since we also need them when dealing with 
the twisted case, we will provide here an independent verification of
Porsch's results. 

\subsection{Critical unipotent classes for $p=3$} \label{sec51}

Assume from now on that $p=3$. We have $|\Irr(W)|=25$ and the character 
table of $W$ is available in {\sf CHEVIE}. Now $F$ acts trivially on $W$ 
and $\gamma\colon W\rightarrow W$ is the identity. 
Consequently, $\Irr(W)=\Irr(W)^\gamma$. By Mizuno \cite{Miz}, there are $21$
unipotent classes of~$G$, which are all $F_3$-stable. Furthermore, for each 
unipotent class $C$, there exists an element $u_0\in C$ such that $F_3(u_0)
=u_0$ and $F_3$ acts trivially on $A(u_0)$; see \cite[Prop.~6.1]{Miz}. Thus, 
condition ($\clubsuit$) in Section~\ref{sec2} holds. The Springer 
correspondence is explicitly described by Spaltenstein \cite[p.~331]{Spa1}. 
As in Example~\ref{sub27}, we run the function {\tt ICCTable} which yields 
the coefficients $p_{E',E}$. By inspection of the output, we see that 
there is just one case which is not covered by the arguments in 
Remark~\ref{critic}; see Table~\ref{tabE6} where the last two columns 
specify $E,E_0\in\Irr(W)$ such that $\iota_G(E_0)=(C,\overline{\Q}_\ell)$ 
and $\iota_G(E)=(C,\cE)$ with $\cE\not\cong \overline{\Q}_\ell$. 

\begin{table}[htbp] \caption{The critical unipotent class for
type $E_6$ with $p=3$} \label{tabE6} 
\begin{center}
$\begin{array}{c@{\hspace{10pt}}c@{\hspace{10pt}}c@{\hspace{10pt}}
c@{\hspace{10pt}}c@{\hspace{10pt}}c} \hline C & \dim C_G(u) & A(u) & 
|C_G(u)^F| & E_0 & E \\ \hline
E_6(a_3)   & 12 &\Z/2\Z& 2q^{12}, 2q^{12}      & 30_3 & 15_5\\ 
\hline\end{array}$
\end{center}
\end{table}

In the table, we use the notation of Spaltenstein \cite{Spa1} for 
$\Irr(W)$, which is just a slight variation of Carter \cite[\S 13.2]{C2}
(or {\sf CHEVIE}); for example, the representation $30_3$ is denoted
by $\phi_{30,3}$ in \cite[p.~415]{C2}.

\subsection{The class $E_6(a_3)$} \label{sec52} Let $C$ be the unipotent 
class denoted by $E_6(a_3)$. (Note that Mizuno uses the notation
$A_5{+}A_1$ for this class.) Since $A(u)\cong \Z/2\Z$ for $u\in C$, we 
are in the situation of Example~\ref{expau2}, with $E_0=30_3$ 
and $E=15_5$. The following argument is analogous to that in
\S \ref{sec42}. We already know that there exists some $u_0\in C^F$ 
such that $F$ acts trivially on $A(u_0)$ and $\delta_{30_3}=
\delta_{15_5}$. Using the output of {\tt ICCTable} and the
argument in Remark~\ref{critic}(a), we have $\delta_{30_3}=1$
and, hence, also $\delta_{15_5}=1$. The only remaining problem is 
to identify $u_0$ in a given list of class representatives. By
Mizuno \cite[Lemma~4.3]{Miz}, there is a choice of signs such that
\[x_{18}:=x_{\alpha_5}(\pm 1)x_{\alpha_4}(\pm 1)x_{\alpha_3}(\pm 1)
x_{\alpha_1} (\pm 1) x_{\alpha_6}(\pm 1) x_{\alpha_1{+}\alpha_2{+}
\alpha_3{+}\alpha_4 {+}\alpha_5 {+}\alpha_6}(\pm 1)\in C.\]
But then we check again that all elements as above, for all possible
choices of the signs, are conjugate under elements of $T_0^{F_3}$. Thus,
we may assume that all signs are $+1$. Then we consider the signs 
$\delta_{30_3}$ and $\delta_{15_5}$ with respect to~$x_{18}$. Since 
$\iota_G(30_3)=(C,\overline{\Q}_\ell)$, we already know that 
$\delta_{30_3}=1$. Since $x_{18}\in C^{F_3}$, we can apply 
Remark~\ref{basep}. Thus, it will be sufficient to determine 
$\delta_{15_5}$ in the special case where $m=1$. Using the output of 
{\tt ICCTable}, we find the coefficients $\tilde{p}_{E'}$ in \S \ref{rem31}. 
This yields the formula
\[Q_1(x_{18})=(30q^3+20q^2+6q+1)Y_{30_3}(x_{18})+
(15q^3+6q^2)Y_{15_5}(x_{18}).\]
Setting $q=3$, we obtain $Q_1(x_{18})=1009+459 \delta_{15_5} 
\in \{1468,550\}$. By an explicit computation counting
coset representatives, we find that $Q_1(x_{18})=1468$. (In the
setting of Lemma~\ref{rem31aa}, we just need to look at sets 
$Q_{1,w}(x_{18})$ where $l(w)\leq 12$.) 
Thus, indeed, $u_0:=x_{18}\in C$ is a representative with respect to which 
we have $\delta_{30_3}=\delta_{15_5}=1$ (regardless of the
choice of a Chevalley basis).

\subsection{A different representative for $E_6(a_3)$} \label{sec53} Let 
$C$ be as above, but now consider the element
\[x_{18}':=x_{\alpha_1}(1)x_{\alpha_6}(1)x_{\alpha_3}(1)x_{\alpha_5}(1)
x_{\alpha_4}(1) x_{\alpha_1{+}\alpha_2{+}\alpha_3{+}\alpha_4{+}\alpha_5
{+}\alpha_6}(1)\in G^{F_3}.\]
(This will be useful in the following section, when we consider twisted
groups of type $E_6$.) We claim that $x_{18}$, $x_{18}'$ are conjugate 
in $G^{F_3}$. First we check that, in the adjoint representation, $x_{18}'$ 
has Jordan blocks of sizes $9^4,7,6^4,3^3,2$. Hence, we have $x_{18}'\in C$;
see \cite[Table~6]{Law}. Furthermore, we check that all the elements
\[x_{\alpha_1}(\pm 1)x_{\alpha_6}(\pm 1)x_{\alpha_3}(\pm 1)x_{\alpha_5}
(\pm 1) x_{\alpha_4}(\pm 1) x_{\alpha_1{+}\alpha_2{+}\alpha_3{+}\alpha_4{+}
\alpha_5 {+}\alpha_6}(\pm 1)\]
are conjugate under elements of $T_0^{F^3}$. As above, we set $q=3$ and 
compute that $Q_1(x_{18}')=1468$. Thus, $x_{18}'$ must be conjugate in 
$G^{F_3}$ to~$x_{18}$ (regardless of the choice of a Chevalley basis). 
Furthermore, if we take $x_{18}'$ as the chosen representative in $C^F$, 
then the correponding signs $\delta_{30_3}$ and 
$\delta_{15_5}$ will again be equal to~$1$.

\subsection{Improving efficiency} \label{sec54} 
We have $|W|=51840$ and there are $8335$ elements $w\in W$ such that
$l(w)\leq 12$. So, a priori, in \S \ref{sec52} we would have to look at 
\[ \sum_{w\in W,l(w)\leq 12} 3^{l(w)}=1569060811\]
coset representatives in order to obtain that $Q_1(x_{18})=1468$. Since we 
only need to establish the estimate $Q_1(x_{18}) >550$, we can try to reduce 
the number of elements $w\in W$ to consider, as follows. 

Let $v\in U_w^F$ for some $w\in W$. Writing $v$ as a product of terms 
$x_\alpha(t)$ where $\alpha\in \Phi^+$ and $t\in k$, and using 
Chevalley's commutator relations, we see that 
\begin{align*}
v^{-1}x_{18}v&=x_{\alpha_5}(\pm 1)x_{\alpha_4}(\pm 1)x_{\alpha_3}(\pm 1)
x_{\alpha_1} (\pm 1) x_{\alpha_6}(\pm 1)\\
& \qquad \times \,\mbox{product of terms $x_\alpha(t)$ where 
$\alpha\in\Phi^+$ and $\alpha\neq \alpha_i$ for all~$i$}.
\end{align*}
Now $v$ will only contribute to $|Q_{1,w}(x_{18})|$ if $\dot{w}^{-1}
v^{-1}x_{18} v\dot{w}\in B_0^F$. So we just consider those $w\in W$
such that $l(w)\leq 12$ and the roots $w(\alpha_5)$, $w(\alpha_4)$, 
$w(\alpha_3)$, $w(\alpha_1)$, $w(\alpha_6)$ are all positive. There 
are $47$ such elements $w\in W$, accounting for only $4220491$ cosets. 
It turns out that, already among these cosets, we find more than $550$ ones 
that are fixed by $x_{18}$. (And a similar procedure works in all the other 
cases that we consider in Section~\ref{sec6}, where $|W|=2903040$ and 
such a reduction becomes even more important.) 

\addtocounter{thm}{4}
\begin{rem} By analogous arguments, we obtain an independent
verification of the results of Malle \cite{Mal1} on the Green functions
of $E_6(2^m)$.
\end{rem}

\section{On the Green functions of twisted $E_6$ in 
characteristic~$3$} \label{sec5b}

Throughout this section, let again $G$ be a simple algebraic group of 
(adjoint) type $E_6$. We have $G=\langle x_\alpha(t)\mid \alpha\in\Phi,t\in 
k\rangle$ where $\Phi$ is the root system of $G$ with respect to~$T_0$. Let 
$\{\alpha_i \mid 1\leq i \leq 6\}$ be the set 
of simple roots with respect to~$B_0$, where the labelling is chosen as
in Table~\ref{Mdynkintbl}. We assume that $G$ is defined and split over 
$\F_p$, with corresponding Frobenius map $F_p\colon G \rightarrow G$ such 
that $F_p(t)=t^p$ for all $t\in T_0$. Now we also consider the 
non-trivial graph automorphism $\tilde{\gamma}\colon G\rightarrow 
G$ of order~$2$, such that $\tilde{\gamma}(B_0)=B_0$ and $\tilde{\gamma}
(T_0)=T_0$. This induces the following permutation of the simple roots:
\[ \alpha_1\rightarrow\alpha_6, \quad \alpha_2\rightarrow \alpha_2,
\quad \alpha_3 \rightarrow \alpha_5,\quad \alpha_4\rightarrow\alpha_4,
\quad \alpha_5\rightarrow \alpha_3, \quad \alpha_6\rightarrow \alpha_1.\]
Let $m\geq 1$ and $q=p^m$. Then $G^F={^2\!E}_6(q)$ where 
$F:=\tilde{\gamma}\circ F_p^m=F_p^m\circ \tilde{\gamma}$.

For $p>3$, the Green functions have been determined by 
Beynon--Spaltenstein \cite{BeSp}. For $p=2$, the Green functions are
explicitly computed by Malle \cite{Mal1}. To complete the picture,
it remains to deal with the case $p=3$.

\subsection{Root elements in $G^F$} \label{sec50}
Let $\fg$ be the Lie algebra of type $E_6$, realized as a subalgebra
of $\mbox{End}(M)$ as in Section~\ref{sec3}; recall that $M$ comes
equipped with a basis $\{u_1,\ldots,u_6\}\cup \{v_\alpha\mid
\alpha \in \Phi\}$. Let $\Phi\rightarrow \Phi$, $\alpha\mapsto 
\alpha^\dagger$, be the permutation induced by the automorphism 
$\tilde{\gamma}\colon G \rightarrow G$. Then define a linear map 
$\tau\colon M \rightarrow M$ by 
\[u_1\mapsto u_6,\;\; u_2\mapsto u_2,\;\; u_3 \mapsto u_5,\;\; 
u_4 \mapsto u_4,\;\; u_5 \mapsto u_3,\;\;u_6\mapsto u_1,\; 
\mbox{ and } \;v_\alpha\mapsto v_{\alpha^\dagger}\]
for all $\alpha\in \Phi$; note that $\tau^2=\mbox{id}_M$. Then one 
simply checks that conjugation with $\tau$ inside $\mbox{End}(M)$ 
defines the non-trivial graph automorphism of $\fg$. We shall assume 
throughout this section that $G=G_k\subseteq \mbox{GL}(\bar{M})$ is 
realised as in \S \ref{sub44}. Then $\tilde{\gamma}$ is also realised
by conjugation with $\tau$ inside $\mbox{GL}(\bar{M})$. Furthermore, 
consider the ``canonical'' Chevalley basis $\{\be_\alpha^\epsilon \mid 
\alpha\in \Phi\}$ of $\fg$. Then one also checks that $\tau \circ 
\be_{\alpha}^\epsilon=\be_{\alpha^\dagger}^\epsilon\circ \tau$ for 
all $\alpha\in \Phi$. In this situation, root elements for $G^F$ have a 
simple description as in \cite[Prop.~13.6.3]{C1}, that is, given 
$\alpha\in \Phi$ and $t \in k$, we have  
\begin{align*}
x_{\alpha}(t)\in G^F &\qquad \mbox{if $\alpha^\dagger=\alpha$ 
and $t^q=t$},\\
x_{\alpha}(t)x_{\alpha^\dagger}(t^q) \in G^F & \qquad\mbox{if 
$\alpha^\dagger\neq \alpha$ and $t^{q^2}=t$}.
\end{align*}
(Note that, in type $E_6$, we have $\alpha+\alpha^\dagger
\not\in \Phi$ for all $\alpha\in \Phi$; so we only have to
consider cases (i) and (ii) of \cite[Prop.~13.6.3]{C1}.) It is
then straightforward to adjust the program in \S \ref{sub47} to
the present situation.

\subsection{Critical unipotent classes for $p=3$} \label{sec51b}

Assume from now on that $p=3$. The induced automorphism $\gamma\colon 
W\rightarrow W$ is given by conjugation with the longest element 
$w_0\in W$. Consequently, $\Irr(W)= \Irr(W)^\gamma$. For each 
$E\in \Irr(W)$, we need to choose a map $\sigma_E \colon E\rightarrow E$
as in \S \ref{sub21}. In {\sf CHEVIE}, the ``preferred'' choice for 
$\sigma_E$ specified by Lusztig \cite[17.2]{L2d} is taken. By 
\cite[Lemma~20.16]{LiSe}, all the $21$ unipotent classes of $G$ 
are stable under $F_3$ and under $\tilde{\gamma}$. Further information 
about the classes is provided in \cite[Table~22.2.3]{LiSe}. This shows 
that, for each unipotent class $C$, there exists some $u_0\in C^F$ such 
that $F$ acts trivially on $A(u_0)$. Thus, condition ($\clubsuit$) in 
Section~\ref{sec2} holds. As in Example~\ref{sub27}, we run the function 
{\tt ICCTable} which yields the coefficients $p_{E',E}$: 

\begin{verbatim}
    gap>  W := RootDatum("2E6");; 
    gap>  Display(CharTable(W)); 
    gap>  uc := UnipotentClasses(W,3);;  # p=3
    gap>  Display(uc);  Display(ICCTable(uc));
\end{verbatim}

By inspection of the output, we see that there is only one case which is 
not covered by the arguments in Remark~\ref{critic}, exactly as in
Section~\ref{sec5}, Table~\ref{tabE6}.

\subsection{The class $E_6(a_3)$ (twisted case)} \label{sec52b} Let $C$ be 
the unipotent class denoted by $E_6(a_3)$. Since $A(u)\cong \Z/2\Z$ for 
$u\in C$, we are in the situation of Example~\ref{expau2}, with 
$E_0=30_3$ and $E=15_5$. The following argument is analogous 
to that in \S \ref{sec52}, but some additional care is needed because
of the presence of the graph automorphism~$\tilde{\gamma}$. We already know 
that there exists some $u_0\in C^F$ such that $F$ acts trivially on $A(u_0)$ 
and $\delta_{30_3}=\delta_{15_5}$. Using the output of 
{\tt ICCTable} and the argument in Remark~\ref{critic}(a), we have 
$\delta_{30_3}=1$ and, hence, also $\delta_{15_5}=1$. Again,
the only remaining problem is to identify $u_0$ in a given list of class 
representatives. We claim that, regardless of the choice of a Chevalley 
basis in the Lie algebra of~$G$, we can take $u_0$ to be the element
already considered in \S \ref{sec53}:
\[x_{18}':=x_{\alpha_1}(1)x_{\alpha_6}(1)x_{\alpha_3}(1)x_{\alpha_5}(1)
x_{\alpha_4}(1) x_{\alpha_1{+}\alpha_2{+}\alpha_3{+}\alpha_4{+}\alpha_5
{+}\alpha_6}(1)\in C.\]
By \S \ref{sec50}, we have $x_{18}'\in G^F$; in fact, $x_{18}'$
is fixed by both $F_3$ and~$\tilde{\gamma}$. Conjugating by elements in
$T_0^F$, one sees again that the $G^F$-conjugacy class of $x_{18}'$
is well-defined, regardless of the choice of a Chevalley basis. We 
consider the signs $\delta_{30_3}$ and $\delta_{15_5}$ with
respect to~$x_{18}'$. Since $\iota_G(30_3)=(C,\overline{\Q}_\ell)$, we 
already know that $\delta_{30_3}=1$. So it remains to show that 
$\delta_{15_5}=1$. Since $x_{18}'$ is fixed by $F_3$ and by
$\tilde{\gamma}$, we can apply the argument in \cite[Remark~3.8]{hartur}.
This shows that 
\[\delta_{15_5}\delta_{15_5}^\circ\quad\mbox{does not depend
on $m$},\]
where $\delta_{15_5}^\circ$ is the sign from the untwisted
case in Section~\ref{sec5}. By \S \ref{sec53}, we have 
$\delta_{15_5}^\circ=1$ for all~$m$. Hence, we conclude that
$\delta_{15_5}$ does not depend on~$m$ either. So it will be 
sufficient to determine $\delta_{15_5}$ in the special case where 
$m=1$. Using the output of {\tt ICCTable}, we find the coefficients 
$\tilde{p}_{E'}$ in \S \ref{rem31}. This yields the formula
\[ Q_1(x_{18}')=(10q^3+4q^2+2q+1)Y_{30_3}(x_{18}')+(q^3-2q^2)
Y_{15_5}(x_{18}').\]
Setting $q=3$, we obtain $Q_1(x_{18}')=313+9\delta_{15_5} \in 
\{322,304\}$. By an explicit computation counting coset representatives, 
we find $Q_1(x_{18}')=322$. (In the setting of Lemma~\ref{rem31aa}, we just 
need to look at sets $Q_{1,w}(x_{18}')$ where $l(w)\leq 12$.) Thus, indeed, 
$x_{18}'\in C$ is a representative with respect to which we have 
$\delta_{30_3}=\delta_{15_5}=1$.

\addtocounter{thm}{2}
\begin{rem} By analogous arguments, we obtain an independent
verification of the results of Malle \cite{Mal1} on the Green functions
of ${^2\!E}_6(2^m)$.
\end{rem}

\section{On the Green functions of type $E_7$ in characteristics~$2,3$}
\label{sec6}

Throughout this section, let $G$ be a simple algebraic group of 
(adjoint) type $E_7$. We have $G=\langle x_\alpha(t)\mid \alpha\in\Phi,t\in 
k\rangle$ where $\Phi$ is the root system of $G$ with respect to~$T_0$. Let 
$\{\alpha_i \mid 1\leq i\leq 7\}$ be 
the set of simple roots with respect to~$B_0$, where the labelling is chosen 
as in Table~\ref{Mdynkintbl}. We assume that $G$ is defined and split over 
$\F_p$, with corresponding Frobenius map $F_p\colon G \rightarrow G$ such 
that $F_p(t)=t^p$ for all $t\in T_0$. Let $F=F_p^m$ where $m\geq 1$.  Then
\[ G^F=E_7(q) \qquad \mbox{where} \qquad q=p^m.\]
For $p>3$, the Green functions have been determined by Beynon--Spaltenstein 
\cite{BeSp}. To complete the picture, it remains to deal with the cases 
$p=2,3$. In the following, if $\alpha=\sum_{i=1}^7 n_i\alpha_i\in \Phi$, 
we just write $x_{n_1n_2\ldots n_7}(t)$ instead of $x_\alpha(t)$. 

\subsection{Critical unipotent classes for $p=2,3$} \label{sec61}

Assume from now on that $p=2$ or $p=3$. We have $|\Irr(W)|=60$ and the
character table of $W$ is available in {\sf CHEVIE}. Now $F$ acts trivially 
on $W$ and $\gamma\colon W\rightarrow W$ is the 
identity.  Consequently, $\Irr(W)=\Irr(W)^\gamma$. The unipotent classes
of $G$ have been classified by Mizuno \cite{Miz2}. Each unipotent class $C$ 
is $F$-stable and there exists an element $u_0\in C$ such that $F_p(u_0)=
u_0$ and $F_p$ acts trivially on $A(u_0)$; see \cite[Table~2]{Miz2}. Thus, 
condition ($\clubsuit$) in Section~\ref{sec2} holds. The Springer 
correspondence is explicitly described by Spaltenstein
\cite[p.~331--333]{Spa1}. As in 
Example~\ref{sub27}, we run the function {\tt ICCTable} which yields the 
coefficients $p_{E',E}$. By inspection of the output, we see that $p_{E',
E_1}\in \{0,1\}$ for all $E'\in \Irr(W)$, where $E_1$ is the trivial 
representation of $W$. Hence, by Remark~\ref{critic}(a), we already have 
that 
\[ \delta_{E_0}=1\qquad \mbox{for all $E_0\in \Irr(W)$ such that 
$\iota_G(E_0)=(C,\overline{\Q}_\ell)$}. \]
There are further cases which are not covered by the arguments in
Remark~\ref{critic}(b); these are specified in Table~\ref{tabE7} where, 
as before, the last two columns specify $E,E_0\in\Irr(W)$ such that 
$\iota_G(E_0)=(C,\overline{\Q}_\ell)$ and $\iota_G(E)=(C,\cE)$ with 
$\cE\not\cong \overline{\Q}_\ell$. 

\begin{table}[htbp] \caption{Critical unipotent classes for
type $E_7$ with $p=2,3$} \label{tabE7} 
\begin{center}
$\begin{array}{c@{\hspace{9pt}}c@{\hspace{9pt}}c@{\hspace{9pt}}
c@{\hspace{9pt}}c@{\hspace{9pt}}c@{\hspace{9pt}}c} \hline p & 
C & \dim C_G(u) & A(u) & |C_G(u)^F| & E_0 & E:\dim \cE_u\\ \hline
2,3 & E_7(a_3)   & 13 &\Z/2\Z& 2q^{13}, 2q^{13}      & 56_3 & 21_6\\  \hline
3 & E_7(a_4)   & 17 &\Z/2\Z& 2q^{17}, 2q^{17}      & 189_5 & 15_7\\  \hline
2,3 & E_7(a_5)   & 21 &\fS_3& 6q^{21}, 2q^{21}, 3q^{21}  & 315_7 & 
\begin{array}{c} 280_9:2\\ 35_{13}:1 \end{array}\\\hline 
2,3 & E_6(a_3)   & 23 &\Z/2\Z& 2q^{21}(q^2{-}1), 2q^{21}(q^2{-}1) & 405_8
& 189_{10}\\ \hline
\end{array}$
\end{center}
\end{table}

In the table, we use the notation of Spaltenstein \cite{Spa1} for 
$\Irr(W)$, which is just a slight variation of Carter \cite[\S 13.2]{C2}
(or {\sf CHEVIE}); for example, the representation $56_3$ is denoted
by $\phi_{56,3}$ in \cite[p.~416]{C2}.

\subsection{The class $E_7(a_3)$ for $p=2,3$} \label{sec62} Let $C$ 
be the unipotent class denoted by $E_7(a_3)$. (Note that Mizuno uses the 
notation $D_6{+}A_1$ for this class.) Since $A(u)\cong \Z/2\Z$ for 
$u\in C$, we are in the situation of Example~\ref{expau2}, with 
$E_0=56_3$ and $E=21_6$. The following argument is analogous to that in
\S \ref{sec45}. We already know that there exists some $u_0\in C^F$ such 
that $F$ acts trivially on $A(u_0)$ and $\delta_{56_3}=\delta_{21_6}=1$.
The only remaining problem is to identify $u_0$ in a given list of class 
representatives. Using Mizuno \cite[Table~2]{Miz}, checking sizes of
Jordan blocks, and arguing as in \S \ref{sec45}, we may take
\begin{align*}
\tilde{u}^\pm& :=x_{1000000}(1)x_{0101000}(1)x_{0011000}(1)
x_{0101100}(1)\\ & \qquad \cdot x_{0111100}(\pm 1)x_{0000110}(1)
x_{0000001}(1) \in C,
\end{align*}
regardless of the sign or the choice of a Chevalley basis.  We consider
the signs $\delta_{56_3}^\pm$ and $\delta_{21_6}^\pm$ with respect to 
$\tilde{u}^\pm$. Since $\iota_G(56_3)=(C,\overline{\Q}_\ell)$, we already 
know that $\delta_{56_3}^\pm=1$. Since $\tilde{u}^\pm\in C^{F_p}$, we can 
apply Remark~\ref{basep}. Thus, it will be sufficient to determine 
$\delta_{21_6}^\pm$ in the special case where $m=1$. Using the output
of {\tt ICCTable}, and setting $q=2$ or $q=3$, we obtain the formulae
\begin{align*}
Q_1(\tilde{u}^\pm)&=(56q^3+27q^2+7q+1)Y_{56_3}(\tilde{u}^\pm)+
(21q^3+7q^2)Y_{21_6}(\tilde{u}^\pm)\\ &=\left\{\begin{array}{cl}
571+196\delta_{21_6} \in \{767,375\} & \qquad \mbox{($q=2$)},\\ 
1777+630\delta_{21_6}\in \{2407,1147\} & \qquad \mbox{($q=3$)}.
\end{array}\right.
\end{align*}
(Of course, in general, the polyonomial expressions for the values of $Q_1$
will depend on whether $p=2$ or $p=3$, but for the classes in 
Table~\ref{tabE7}, they do coincide.) By an explicit computation counting 
coset representatives (see Lemma~\ref{rem31aa}), we find that 
$Q_1(\tilde{u}^\pm)$ equals $767$ if $p=2$, and $2407$ if $p=3$. 
(If $p=2$, then we just need to look at sets $Q_{1,w}(\tilde{u}^\pm)$ 
where $l(w)\leq 15$ in order to find $767$ cosets that are fixed; 
if $p=3$, then we just need to go up to $l(w)\leq 6$ in order to find 
strictly more than $1147$ cosets that are fixed.) In particular, 
$\tilde{u}^+$, $\tilde{u}^-$ are conjugate in $G^F$. Thus, indeed,
$\tilde{u}^+\in C$ is a representative with respect to which we have
$\delta_{56_3}=\delta_{21_6}=1$ (regardless of the choice of a Chevalley
basis).

\subsection{The class $E_7(a_4)$ for $p=3$} \label{sec66} Let $p=3$ and $C$ 
be the unipotent class denoted by $E_7(a_4)$. (Note that Mizuno uses the 
notation $D_6(a_1){+}A_1$ for this class.) Since $A(u)\cong \Z/2\Z$ for 
$u\in C$, we are in the situation of Example~\ref{expau2}, with 
$E_0=189_5$ and $E=15_7$. The following argument is analogous to that in
\S \ref{sec45}. We already know that there exists some $u_0\in C^F$ 
such that $F$ acts trivially on $A(u_0)$ and $\delta_{189_7}=
\delta_{15_7}=1$. The only remaining problem is to identify 
$u_0$ in a given list of class representatives. Using Mizuno 
\cite[Table~2]{Miz}, checking sizes of Jordan blocks, and arguing as 
in \S \ref{sec45}, we may take
\begin{align*}
\tilde{u}^\pm& :=x_{1000000}(1)x_{0111000}(1)x_{0011100}(1)x_{0101100}(1)\\
&\qquad\cdot x_{0001110}(1)x_{0011110}(\pm 1)x_{0000011}(1)\in C,
\end{align*}
regardless of the sign or the choice of a Chevalley basis.  We consider
the signs $\delta_{189_5}^\pm$ and $\delta_{15_7}^\pm$ with respect
to $\tilde{u}^\pm$. We already know that $\delta_{189_5}^\pm=1$. 
Since $\tilde{u}^\pm\in C^{F_3}$, we can apply Remark~\ref{basep}. Thus, it 
will be sufficient to determine $\delta_{15_7}$ in the special case 
where $m=1$. Using the output of {\tt ICCTable}, we find the following
formula.
\[Q_1(\tilde{u}^\pm)= (189q^5+155q^4+77q^3+27q^2+7q+1)Y_{189_5}
(\tilde{u}^\pm)+ 15q^5Y_{15_7}(\tilde{u}^\pm).\]
Setting $q=3$, we obtain that $Q_1(\tilde{u}^\pm)=60826+3645
\delta_{15_7}\in \{64471,57181\}$. By an 
explicit computation counting coset representatives, we find that
$Q_1(\tilde{u}^\pm)=64471$. (We just need to look at sets
$Q_{1,w}(\tilde{u}^\pm)$ where $l(w) \leq 16$ in order to find 
strictly more than $57181$ cosets that are fixed by~$\tilde{u}^\pm$.) 
Thus, $\tilde{u}^+$ and $\tilde{u}^-$ are conjugate in $G^F$ and,
indeed, $\tilde{u}^+\in C$ is a representative with respect to which
we have $\delta_{15_7}=1$ (regardless of the choice of a Chevalley basis).

\subsection{The class $E_7(a_5)$ for $p=2,3$} \label{sec64} Let $C$ 
be the unipotent class denoted by $E_7(a_5)$. (Note that Mizuno uses the 
notation $D_6(a_2){+}A_1$ for this class.) We have $A(u) \cong \fS_3$ for 
$u\in C$. The set $C^F$ splits into three classes in $G^F$, with centraliser 
orders $6q^{21},2q^{21},3q^{21}$. Thus, up to conjugation by elements in 
$G^F$, there is a unique $u_0\in C^F$ such that $|C_G(u_0)^F|=6q^{21}$ and $F$ 
acts trivially on $A(u_0)$. Now, via the Springer correspondence, there are
 three irreducible representations of $W$ associated with $C$. These are 
$315_7$, $280_9$, $35_{13}$; see Table~\ref{tabE7}. We already know that 
$\delta_{315_7}=1$. Now $u_0$ can be chosen to be fixed by $F_p$; see the 
explicit expression in \cite[Table~2]{Miz2}. So, by Remark~\ref{basep}, 
it is sufficient to determine $\delta_{280_9}$, $\delta_{35_{13}}$ in the
special case where $m=1$. The following argument is analogous to that in 
\S \ref{sec44}.

Let $u_0=u_1,u_2,u_3\in C^{F_p}$ be representatives of the 
$G^{F_p}$-conjugacy classes that are contained in $C^{F_p}$, and let $a_1,
a_2,a_3\in A(u_0)$ be corresponding representatives of the conjugacy 
classes of $A(u_0)$, where the notation is such that $a_2$ corresponds to a 
transposition in $\fS_3\cong A(u_0)$ and $a_3$ to a $3$-cycle. Using the 
output of {\tt ICCTable}, and setting $q=2$ or $q=3$, we find the formula
\begin{align*}
Q_1(u_i)&= (315q^7+483q^6+350q^5+182q^4+77q^3+27q^2+7q+1)Y_{315_7}(u_i)\\
&\;+ (280q^7+330q^6+161q^5+48q^4+7q^3) Y_{280_9}(u_i)+(35q^7+21q^6)
Y_{35_{13}}(u_i)\\
& =\left\{\begin{array}{cl} 86083\,Y_{315_7}(u_i)+ 62936\,Y_{280_9}(u_i)+ 
5824\,Y_{35_{13}}(u_i) & \qquad\mbox{($q=2$)},\\ 1143148\,Y_{315_7}(u_i)+ 
896130\,Y_{280_9}(u_i)+ 91854\,Y_{35_{13}}(u_i) & \qquad\mbox{($q=3$)},
\end{array}\right.
\end{align*}
for $1\leq i \leq 3$. Now, up to the signs $\delta_{280_9}$ and  
$\delta_{35_{13}}$, the values of the $Y$-functions on $u_i$ are given 
by character values of $\fS_3$; see Remark~\ref{lu11}. Thus, we obtain
\[Q_1(u_1)=\left\{\begin{array}{cl} 86083+ 2\cdot 62936 \delta_{280_9}+ 5824
\delta _{35_{13}} &\qquad\mbox{($q=2$)},\\ 1143148+ 2\cdot 896130
\delta_{280_9}+ 91854\delta_{35_{13}} & \qquad\mbox{($q=3$)}.\end{array}
\right.\]
Since $Q_1(u_1)\geq 0$, this already forces that $\delta_{280_9}=1$ in
both cases. We claim that we also have $\delta_{35_{13}}=1$. For this
purpose, we consider the values at $u_3$:
\[ Q_1(u_3) =\left\{\begin{array}{cl} 86083-62936 + 5824\delta _{35_{13}}
=23147+ 5824\delta _{35_{13}} & \quad\mbox{($q=2$)},\\ 1143148-896130+ 
91854\delta_{35_{13}}=247018+ 91854\delta_{35_{13}} &\quad\mbox{($q=3$)}.
\end{array}\right.\]
By Mizuno \cite[Lemma~21]{Miz2}, there is a choice of signs $\varepsilon_i=
\pm 1$ such that 
\begin{align*}
y_{46}&:=x_{1011000}(\varepsilon_1)x_{0111000}(\varepsilon_2)
x_{0011100}(\varepsilon_3)x_{0101100}(\varepsilon_4)\\&
\qquad \cdot x_{0001110}(\varepsilon_5)x_{0000111}(\varepsilon_6)
x_{1111100}(\varepsilon_7)x_{1111110}(\varepsilon_8)\in C
\end{align*}
and $|C_G(y_{46})^F|=3q^{21}$. Then $y_{46}$ will be conjugate to 
$u_3$ in~$G^F$. 

If $q=2$, then $\varepsilon_i=1$ for all~$i$, and the above formula shows 
that $Q_1(u_3)\in \{28971,17323\}$. By an explicit computation counting 
coset representatives (see Lemma~\ref{rem31aa}), we find that 
$Q_1(y_{46})=17323$. (We just need to look at sets $Q_{1,w}(y_{46})$
where $l(w)\leq 13$.) Hence, $\delta_{35_{13}}=1$, as claimed.

Now assume that $p=3$. Then $Q_1(u_3)\in \{338872,155164\}$. Now we 
simply consider all elements $y_{46}$ as above, for all posssible choices 
of the signs $\varepsilon_i$. For each such choice, $y_{46}$ has Jordan
blocks of sizes $9^9,7,6^2,53,3^6$ and, hence, $y_{46}\in C$; see
\cite[Table~8]{Law}. Furthermore, we find that $Q_1(y_{46})=338872$ in each
case. (We just need to look at sets $Q_{1,w}(y_{46})$ where $l(w)\leq 8$.)
Hence, $\delta_{35_{13}}=1$, as claimed.

\subsection{The class $E_6(a_3)$ for $p=2,3$} \label{sec65} Let $C$ 
be the unipotent class denoted by $E_6(a_3)$. (Note that Mizuno uses the 
notation $(A_5{+}A_1)'$ for this class.) Since $A(u)\cong \Z/2\Z$ for 
$u\in C$, we are in the situation of Example~\ref{expau2}, with 
$E_0=405_8$ and $E=189_{10}$. The following argument is analogous to that in
\S \ref{sec42}. We already know that there exists some $u_0\in C^F$ 
such that $F$ acts trivially on $A(u_0)$ and $\delta_{405_8}=
\delta_{189_{10}}=1$. The only remaining problem is to identify 
$u_0$ in a given list of class representatives. Using Mizuno 
\cite[Table~2]{Miz}, checking sizes of Jordan blocks, and arguing as 
in \S \ref{sec42}, we may take
\[\tilde{u}:=x_{1010000}(1)x_{0111000}(1)x_{0011100}(1)x_{1111000}(1)
x_{0101110}(1)x_{0001111}(1)\in C,\]
regardless of the choice of a Chevalley basis. We consider the signs 
$\delta_{405_8}$ and $\delta_{189_{10}}$ with respect to~$\tilde{u}$. 
We already know that $\delta_{405_8}=1$. Since $\tilde{u}\in C^{F_3}$, 
we can apply Remark~\ref{basep}. Thus, it will be sufficient to 
determine $\delta_{189_{10}}$ in the special case where $m=1$. Using 
the output of {\tt ICCTable}, we find the following formula.
\begin{align*}
Q_1(\tilde{u})&=(405q^8+973q^7+933q^6+532q^5+230q^4+84q^3+27q^2+7q+1)
Y_{405_8}(\tilde{u})\\ &\qquad\qquad +(189q^8+420q^7+351q^6+161q^5+
48q^4+7q^3)Y_{189_{10}}(\tilde{u})\\ 
&= \left\{\begin{array}{cl} 309435+ 130584\delta_{189_{10}}\in 
\{440019,178851\} &\qquad \mbox{($q=2$)},\\ 5615752 + 2457648 
\delta_{189_{10}} \in \{8073400,3158104\}&\qquad \mbox{($q=3$)}.
\end{array}\right.
\end{align*} 
On the other hand, using Lemma~\ref{rem31aa}, we can try to directly
compute $Q_1(\tilde{u})$. 

If $p=2$, then we just need to look at sets $Q_{1,w}(\tilde{u})$
where $l(w)\leq 12$ in order to find strictly more than $178851$ cosets 
that are fixed. A comparison with the above formula shows that we must have 
$\delta_{189_{10}}=1$. If $p=3$, then we need to go up to $l(w)\leq
13$ in order to find strictly more than $3158104$ cosets that are fixed. 
(This is the hardest case for type $E_7$; the computation requires
less than $4$~GB of main memory but takes about a week on a standard 
computer, even with efficiency improvements as in \S \ref{sec54}.) 
So we also have $\delta_{189_{10}}=1$ in this case.

\section{On the Green functions of type $E_8$ in characteristics~$2$}
\label{sec7}

In this final section, let $G$ be a simple algebraic group of
type $E_8$ and $F_p\colon G \rightarrow G$ be a split Froebnius map
as before. Let $F=F_p^m$ where $m\geq 1$. Then 
\[ G^F=E_8(q) \qquad \mbox{where} \qquad q=p^m.\]
For $p>5$, the Green functions have been determined by Beynon--Spaltenstein
\cite{BeSp}. Here, we will not be able to complete the computation of the 
Green functions for the cases where $p=2,3,5$. But we can at least show 
that the very particular case mentioned in Remark~\ref{critic} also 
occurs for $p=2$. So assume from now on that $p=2$.

We have $|\Irr(W)|=112$ and the character table of $W$ is available in 
{\sf CHEVIE}. Now $F$ acts trivially on $W$ and $\gamma\colon W\rightarrow 
W$ is the identity. Consequently, $\Irr(W)=\Irr(W)^\gamma$. The unipotent 
classes of $G$ have been classified by Mizuno \cite{Miz2}. The Springer 
correspondence is explicitly described by Spaltenstein 
\cite[p.~333--336]{Spa1}. The ``very particular'' case is related to
the unipotent class $C$ specified as follows.
\[\begin{array}{c@{\hspace{9pt}}c@{\hspace{9pt}}
c@{\hspace{9pt}}c@{\hspace{9pt}}c@{\hspace{9pt}}c} \hline 
C & \dim C_G(u) & A(u) & |C_G(u)^F| & E_0 & E:\dim \cE_u\\ \hline
E_8(b_6)   & 28 &\fS_3& 6q^{28}, 2q^{28}, 3q^{28}  & 2240_{10} & 
\begin{array}{c} 175_{12}:2\\ 840_{13}:1 \end{array}\\\hline 
\end{array}\]
where, as before, the last two columns specify $E,E_0\in\Irr(W)$ such that 
$\iota_G(E_0)=(C,\overline{\Q}_\ell)$ and $\iota_G(E)=(C,\cE)$ with 
$\cE\not\cong \overline{\Q}_\ell$. (Note that Mizuno uses the notation 
$D_8(a_3)$ for this class; furthermore, the same conventions for the 
notation of $\Irr(W)$ apply as in Section~\ref{sec6}.) Up to conjugation 
by elements in $G^F$, there is a unique $u_0\in C^F$ such that 
$|C_G(u_0)^F|=6q^{28}$ and $F$ acts trivially on $A(u_0)$. By Mizuno 
\cite[Lemma~53]{Miz2}, such a representative is given by 
\begin{align*} 
u_0:=z_{77}& =x_{11110000}(1) x_{10111000}(1) x_{01111000}(1) 
x_{00111100}(1)\\ &\qquad\qquad\cdot 
x_{01011100}(1) x_{00011110}(1) x_{00000011}(1) x_{00000111}(1).
\end{align*}
(As before, if $\alpha=\sum_{i=1}^8 n_i\alpha_i\in \Phi$, we just write 
$x_{n_1n_2\ldots n_8}(t)$ instead of $x_\alpha(t)$.) We consider the
corresponding signs $\delta_E$ with respect to $u_0$; we claim that
\[ \delta_{2240_{10}}=1 \qquad\mbox{and}\qquad
\delta_{840_{13}}=(-1)^m \quad (q=2^m).\]
This is seen as follows. As before, since $u_0\in C^{F_2}$, it is 
sufficient to determine the signs in the special case where $m=1$ (see
Remark~\ref{basep}). As in Example~\ref{sub27}, we run the function 
{\tt ICCTable} which yields the coefficients $p_{E',E}$. By inspection of 
the output, and using the argument in Remak~\ref{critic}(a), we already 
see that $\delta_{2240_{10}}=1$.

We now follow the argument in \S \ref{sec64}. Let $u_0=u_1,u_2,u_3\in 
C^{F_2}$ be representatives of the $G^{F_2}$-conjugacy classes that are
contained in $C^{F_2}$, and let $a_1,a_2,a_3\in A(u_0)$ be corresponding 
representatives of the conjugacy classes of $A(u_0)$, where the notation 
is such that $a_2$ corresponds to a transposition in $\fS_3\cong A(u_0)$
and $a_3$ to a $3$-cycle. By Mizuno \cite[Lemma~53]{Miz2}, such 
representatives are given by 
\begin{align*}
u_2:=z_{78}&=z_{77}x_{00001111}(1),\\
u_3:=z_{79}&=x_{11110000}(1) x_{10111000}(1) x_{01111000}(1)x_{00111100}(1)
x_{01011100}(1)\\&\qquad\qquad\qquad \cdot x_{00011110}(1) x_{00000001}(1)
x_{00000111}(1)x_{00001111}(1).
\end{align*}
Using the output of {\tt ICCTable}, and setting $q=2$, we find the formula
\begin{align*}
Q_1(u_i) &=(2240q^{10}+3688q^9+3444q^8+2360q^7+1351q^6\\& \qquad\qquad 
+672q^5+ 294q^4+112q^3+35q^2+8q+1)Y_{2240_{10}}(u_i)\\&\qquad +
175q^{10}Y_{175_{12}}(u_i)+(840q^{10}+650q^9+160q^8)Y_{840_{13}}(u_i)\\
&= 5479485\,Y_{2240_{10}}(u_i)+179200\,Y_{175_{12}}(u_i)+1233920\,
Y_{840_{13}}(u_i) \qquad \mbox{($q=2$)}.
\end{align*}
for $1\leq i \leq 3$. Up to the signs $\delta_{175_{12}}$ and 
$\delta_{840_{13}}$, the values of the $Y$-functions on $u_i$ are given 
by character values of $\fS_3$; see Remark~\ref{lu11}. Thus, for $q=2$, 
we obtain
\begin{align*}
Q_1(u_1)&=5479485+2\cdot 179200\,\delta_{175_{12}}+1233920\, 
\delta_{840_{13}},\\
Q_1(u_2)&=5479485-1233920\, \delta_{840_{13}},\\
Q_1(u_3)&=5479485-179200\,\delta_{175_{12}}+1233920\, 
\delta_{840_{13}}.
\end{align*}
Assume, if possible, that $\delta_{840_{13}}=1$. Then we would have 
$Q_1(u_2)=5479485-1233920=4245565$ for $q=2$. On the other hand, running 
through all sets $Q_{1,w}(z_{78})$ (as in Lemma~\ref{rem31aa}) where 
$l(w)\leq 23$, we already find $4047101$ cosets that are fixed 
by~$z_{78}$. Running also through sets $Q_{1,w}(z_{78})$ where $l(w)=24$, 
we find further $305856$ cosets that are fixed. Thus, we have $Q_1(u_2)>
4245565$ and so we conclude that $\delta_{840_{13}}=-1$, as claimed. 

The total running time for these computations is about $1$~year, even
with efficiency improvements as in \S \ref{sec54}. (It already takes 
$4$ months just to deal with those sets $Q_{1,w}(z_{78})$
where $l(w)=24$.) Everything is much slower than in the previous cases 
because, for type $E_8$, we only have at our disposal the $248$-dimensional 
adjoint representation, whereas for type $E_7$ we could use the 
$56$-dimensional minuscule weight representation. However, distributing 
the task over a small number of (independent) standard desktop computers
 with altogether $30$~processors, we could manage to complete the 
computations in $2$~weeks.~---~Once $\delta_{840_{13}}$ is determined, we 
obtain $Q_1(u_1)=5479495-1233920+2\cdot 179200 \delta_{175_{12}}\in 
\{ 4603965,3887165\}$ (for $q=2$). If we could find strictly more than 
$3887165$ cosets that are fixed by $z_{77}$, then we would be able to 
conclude that $\delta_{175_{12}}=1$. However, this appears to be even 
more difficult computationally, the reason being that the difference 
between the lower and the upper bound for the value of $Q_1$ is much smaller 
than in the previous case. (So we would need to find \textit{almost all} 
cosets that are fixed by $z_{77}$.)

All this clearly indicates that type $E_8$ is not completely out of reach, 
but some more sophisticated algorithms are certainly required in order to 
deal with the remaining open cases (especially for $p=3,5$). An independent 
verification of the above results would also be highly desirable.


%
%
%


\end{document}